\newtheorem{Lemma}             {Lemma}
\newtheorem{Corollary}  [Lemma]{Corollary}
\newtheorem{Proposition}[Lemma]{Proposition}
\newtheorem{Theorem}    [Lemma]{Theorem}
\newcommand{\ov}{\overline}
\newcommand{\op}{\operatorname}
\title[Self-dual Brauer]{Self-dual modules in characteristic two and normal subgroups}
\author{Rod Gow}
\address{School of Mathematical Sciences,\\University College Dublin\\IRELAND}
\email{Rod.Gow@ucd.ie}
\author{John Murray}
\address{Department of Mathematics and Statistics\\Maynooth University\\IRELAND}
\email{John.Murray@mu.ie}
\date{August 19, 2020}
\begin{document}
\maketitle

\begin{abstract}
We prove Clifford theoretic results which only hold in characteristic $2$.

Let $G$ be a finite group, let $N$ be a normal subgroup of $G$ and let $\varphi$ be an irreducible $2$-Brauer character of $N$. We show that $\varphi$ occurs with odd multiplicity in the restriction of some self-dual irreducible Brauer character $\theta$ of $G$ if and only if $\varphi$ is $G$-conjugate to its dual. Moreover, if $\varphi$ is self-dual then $\theta$ is unique and the multiplicity is $1$.

Next suppose that $\theta$ is a self-dual irreducible $2$-Brauer character of $G$ which is not of quadratic type. We prove that the restriction of $\theta$ to $N$ is a sum of distinct self-dual irreducible Brauer character of $N$, none of which have quadratic type. Moreover, $G$ has no self-dual irreducible $2$-Brauer character of non-quadratic type if and only if $N$ and $G/N$ satisfy the same property.

Finally, suppose that $b$ is a real $2$-block of $N$. We show that there is a unique real $2$-block of $G$ covering $b$ which is weakly regular with respect to $N$.
\end{abstract}

\section{Statement of results}


\noindent Throughout the paper $G$ is a finite group and $N$ is a normal subgroup of $G$. We fix a $2$-modular system $(K,R,F)$ for $G$. So $R$ is a complete discrete valuation ring which has field of fractions $K$ of characteristic $0$ and residue field $F=R/J(R)$ of characteristic $2$. We will assume that $K$ and $F$ are splitting fields for all subgroups of $G$. For example, this holds if $K$ contains a primitive $|G|$-th root of unity. We use $r^*$ to denote the image of $r\in R$ in $F$. Each integer $m$ can be factored as $m=m_2m_{2'}$, where $m_2$ is a power of $2$ and $m_{2'}$ is odd. 

We use $\op{Irr}(G)$ to denote the irreducible $K$-characters of $G$. These have values in a cyclotomic subfield of $K$ which can be identified with a subfield of ${\mathbb C}$. So $\op{Irr}(G)$ can be identified with the irreducible complex characters of $G$. Next recall that the Brauer character of an $FG$-module is an $R$-valued class function defined on the $2$-regular (odd order) elements of $G$. The Brauer characters of the irreducible $FG$-modules are called the irreducible 2-Brauer characters of $G$. We use $\op{IBr}(G)$ to denote all such characters. The dual of a character $\theta$ is the character $\ov\theta$ defined by $\ov\theta(g):=\theta(g^{-1})$, for all $g\in G$. We say that $\theta$ is self-dual if $\theta=\ov\theta$. This holds if and only if $\theta$ is the character of some self-dual module.

Let $\theta$ be an irreducible Brauer character of $G$ and let $\varphi$ be an irreducible Brauer character of a subgroup $H$ of $G$. We say that $\theta$ lies over $\varphi$ if $\theta$ is constituent of the induced character $\varphi{\uparrow^G}$. Likewise we say that $\varphi$ lies under $\theta$ if $\varphi$ is a constituent of the restricted character $\theta{\downarrow_H}$. There is no analogue of Frobenius reciprocity for Brauer characters. So the fact that $\theta$ lies over $\varphi$ does not imply that $\varphi$ lies under $\theta$, and conversely. However these implications do hold if $H$ is normal in $G$ \cite[p155 and (8.7)]{N}. Our first result is:

\begin{Theorem}\label{T:conjugate_dual}
Let $\varphi$ be an irreducible $2$-Brauer character of\/ $N$. Then $\varphi$ lies under some self-dual\/ irreducible $2$-Brauer character $\theta$ of\/ $G$ if only if\/ $\varphi$ is $G$-conjugate to $\ov\varphi$. If such a self-dual $\theta$ exists, it can be chosen so that $\varphi$ occurs with odd multiplicity in $\theta{\downarrow_N}$.
\end{Theorem}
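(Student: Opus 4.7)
The forward direction is immediate from Clifford's theorem for Brauer characters of a normal subgroup. If $\theta\in\op{IBr}(G)$ is self-dual with $\varphi$ a constituent of $\theta{\downarrow_N}$, then $\theta{\downarrow_N}=\ov{\theta{\downarrow_N}}$ also has $\ov\varphi$ as a constituent; since these constituents form a single $G$-orbit, $\varphi$ and $\ov\varphi$ are $G$-conjugate.

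For the converse, fix $g\in G$ with $\varphi^g=\ov\varphi$; then $g$ normalises $T:=I_G(\varphi)$ and $g^2\in T$. The Clifford correspondence gives a bijection $\op{IBr}(T|\varphi)\to\op{IBr}(G|\varphi)$, $\psi\mapsto\psi{\uparrow^G}$, with $[\psi{\uparrow^G}{\downarrow_N}:\varphi]=\psi(1)/\varphi(1)$. A direct calculation shows that $\psi{\uparrow^G}$ is self-dual if and only if $\sigma(\psi):=(\ov\psi)^{g^{-1}}$ equals $\psi$; since $g^2\in T$ acts trivially on $\op{IBr}(T)$, the map $\sigma$ is an involution on $\op{IBr}(T|\varphi)$. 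So the task reduces to producing a $\sigma$-fixed $\psi$ with $\psi(1)/\varphi(1)$ odd.

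I would pass, via a character-triple isomorphism compatible with duality and with conjugation by $g$, to the situation in which $\varphi$ has an extension $\hat\varphi\in\op{IBr}(T)$. This reduction is available in characteristic two because $F^\times$ has no $2$-torsion, so $H^2(T/N,F^\times)$ has odd order---it is the odd part of the Schur multiplier of $T/N$---and the class obstructing the extension of $\varphi$ can be trivialised by a suitable $2'$-central extension. In the reduced setting take $\psi=\hat\varphi$, so $\psi(1)/\varphi(1)=1$, and it remains only to arrange $\sigma(\hat\varphi)=\hat\varphi$. Writing $\ov{\hat\varphi}=\hat\varphi^g\,\tilde\lambda$ for the unique $\lambda\in A:=\op{Hom}(T/N,F^\times)$, a further application of duality and $g$-conjugation forces $\lambda^g=\lambda$. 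Replacing $\hat\varphi$ by $\hat\varphi\,\tilde\mu$ with $\mu\in A$ alters the defect by $\mu^g\mu$, so I must solve $\lambda=\mu^g\mu$ in $A$. Now $A$ has odd order (since $F^\times$ has only odd torsion), and $g$ acts on $A$ as an involution; decomposing $A=A^+\times A^-$ into $\pm$-eigenspaces, the map $\mu\mapsto\mu^g\mu$ kills $A^-$ and squares $A^+$, so its image is $(A^+)^2=A^+$. As $\lambda\in A^+$, a suitable $\mu$ exists and $\theta:=(\hat\varphi\,\tilde\mu){\uparrow^G}$ is the desired self-dual Brauer character, with $\varphi$ appearing in $\theta{\downarrow_N}$ with multiplicity exactly $1$.

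The main obstacle is the choice of a character-triple isomorphism that intertwines both duality and conjugation by $g$: this is the technical heart of the argument, and the step where the characteristic-two hypothesis is essential (as it forces the obstruction cocycles to have odd order and hence to be trivialisable). Once this reduction is in hand, the construction of $\mu$ is a routine Hilbert-$90$-style calculation in the odd-order abelian group $A$.
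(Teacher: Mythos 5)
Your forward direction and the reduction of the converse to finding a $\sigma$-fixed $\psi\in\op{IBr}(T\mid\varphi)$ with $\psi(1)/\varphi(1)$ odd are both correct, and the Hilbert-$90$ computation in the odd-order group $A=\op{Hom}(T/N,F^\times)$ is sound once an extension $\hat\varphi$ of $\varphi$ to $T$ is available. This is a genuinely different route from the paper's, which never isolates an individual $\varphi$: the paper writes the second orthogonality relations modulo $J(R)$, extracts an invertible $r\times r$ sub-matrix of the Brauer character table indexed by the self-dual $\theta_i$ and the real $2$-regular classes, further restricts columns to real $2$-regular classes contained in $N$, and finally invokes Brauer's permutation lemma to see that the resulting $\theta_i$ already account for every real $G$-orbit on $\op{IBr}(N)$. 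That counting argument sidesteps Clifford theory and cocycles entirely; your constructive argument, if completed, would in fact deliver the sharper conclusion that the multiplicity can be taken to be exactly $1$, not merely odd.

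The gap is exactly where you flag it: the step "pass via a character-triple isomorphism compatible with duality and with conjugation by $g$" is asserted, not constructed, and it is not something that can simply be cited. The standard theory of character-triple isomorphisms (Isaacs in the ordinary case, Navarro for Brauer characters) gives isomorphisms preserving the lying-over and multiplicity data, but it does not by itself guarantee an isomorphism that simultaneously intertwines the contragredient functor and the outer action of $g\in T^*\setminus T$ on $(T,N,\varphi)$. Knowing that the obstruction class in $H^2(T/N,F^\times)$ has odd order tells you it dies on a suitable $2'$-central extension, but you still have to check that the central extension, the lift of $\varphi$, and all identifications can be chosen so that both duality and $g$-conjugation transport across the isomorphism; that is a substantial equivariance verification, not a formality. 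Until it is supplied the converse is unproved. A safer way to realise your strategy would be to rerun, for a pairing $W\times W^g\to F$ witnessing $W^g\cong W^*$, the explicit cocycle computation that the paper performs in Theorem~\ref{T:self_dual_extends} for a self-dual $W$, and only then invoke the Hilbert-$90$ argument in $A$; that avoids appealing to an equivariant triple isomorphism altogether.
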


Our second result is:

\begin{Theorem}\label{T:real_extends}
Let $\varphi$ be a self-dual irreducible $2$-Brauer character of\/ $N$. Then
\begin{itemize}
 \item[(i)] $\varphi$ extends to its stabilizer in $G$, and exactly one of these extensions is self-dual.
 \item[(ii)] $G$ has a unique self-dual irreducible $2$-Brauer character $\theta$ such that $\varphi$ occurs with odd multiplicity in $\theta{\downarrow_N}$.
 \item[(iii)] $\varphi$ occurs with multiplicity $1$ in $\theta{\downarrow_N}$.
\end{itemize}
\end{Theorem}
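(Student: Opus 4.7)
The plan is to establish (i), (ii), and (iii) together, reducing (ii) and (iii) to the case where $\varphi$ is $G$-stable via Clifford, producing the self-dual extension in (i) by a rescaling trick specific to $\op{char}F=2$, and pinning down uniqueness through a characteristic-$2$ lemma on odd-degree self-dual modules.

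First I would pass to the stabilizer $T=\op{Stab}_G(\varphi)$. The Clifford correspondence for Brauer characters $\psi\mapsto\psi{\uparrow^G}$ is a bijection between the irreducible Brauer characters of $T$ lying over $\varphi$ and those of $G$ lying over $\varphi$ \cite[(8.9)]{N}; it commutes with duality (induction does), and by direct computation $\psi{\uparrow^G}{\downarrow_N}=\sum_{g\in [G/T]}(\psi{\downarrow_N})^g$, in which only the trivial coset contributes $\varphi$ (the remaining $\varphi^g$ are distinct from $\varphi$). So the multiplicity of $\varphi$ in $\psi{\downarrow_N}$ equals its multiplicity in $(\psi{\uparrow^G}){\downarrow_N}$, and to prove (ii) and (iii) I may assume $T=G$. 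At this point uniqueness in (i) is also immediate: two self-dual extensions of $\varphi$ differ by a linear character $\lambda$ of $G/N$ valued in $F^\times$; self-duality of the twist forces $\lambda^2=1$, and $F^\times$ has no $2$-torsion in characteristic $2$, so $\lambda=1$.

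For existence, which is the heart of (i), let $V$ afford $\varphi$ and fix a non-degenerate $N$-invariant bilinear form $b$ on $V$, unique up to scalar by Schur. For each $g\in G$, $G$-stability gives an $F$-linear $\sigma_g\in\op{GL}(V)$ with $\sigma_g(nv)=(g^{-1}ng)\sigma_g(v)$, unique up to an $F^\times$-scalar; by uniqueness of $b$ there is $\mu(g)\in F^\times$ with $b(\sigma_g v,\sigma_g w)=\mu(g)b(v,w)$. Choosing each $\sigma_g$ of finite order keeps $\mu(g)$ of finite, hence odd, order in $F^\times$, so $\mu(g)$ has a unique square root; rescaling $\sigma_g\mapsto\mu(g)^{-1/2}\sigma_g$ makes every $\sigma_g$ preserve $b$. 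Writing $\sigma_g\sigma_h=c(g,h)\sigma_{gh}$ and applying $b$ to both sides gives $c(g,h)^2=1$, whence $c\equiv 1$ since $F$ has no nontrivial square roots of unity. So the $\sigma_g$ constitute a genuine representation of $G$ extending $V$ and preserving $b$: a self-dual extension $\hat\varphi$.

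Finally, still with $T=G$, Gallagher's theorem for Brauer characters describes the IBr of $G$ lying over $\varphi$ as $\{\hat\varphi\cdot\beta:\beta\in\op{IBr}(G/N)\}$, the product being self-dual iff $\beta$ is, with $\varphi$ appearing in its $N$-restriction with multiplicity $\beta(1)$. So (ii) and (iii) boil down to the following characteristic-$2$ lemma: the trivial character is the unique self-dual irreducible Brauer character of odd degree of any finite group. Given such $\beta$ with module $W$ and invariant form $\beta_W$, the map $w\mapsto\beta_W(w,w)$ is additive and Frobenius-semilinear (because $2=0$ in $F$), and is invariant; its kernel is therefore an invariant $F$-subspace of $W$, which by irreducibility is either $W$ (making $\beta_W$ alternating and so degenerate on the odd-dimensional $W$, a contradiction) or $0$ (forcing $\dim W=1$, hence $W$ trivial since $F^\times$ has no $2$-torsion). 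The main obstacle is the cocycle-killing rescaling step in the existence argument: this is where characteristic $2$ really bites, via the non-existence of nontrivial $2$nd roots of unity in $F^\times$.
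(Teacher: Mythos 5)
Your proof follows essentially the same route as the paper's Theorem~\ref{T:self_dual_extends}: reduce to the $G$-stable case via the Clifford correspondence, build a projective representation preserving the invariant form, rescale by square roots so that the resulting cocycle squares to $1$ (hence is trivial in characteristic $2$), and then combine Gallagher with Fong's lemma to deduce uniqueness and multiplicity one. Two small points of difference: your justification for extracting square roots---``choosing each $\sigma_g$ of finite order''---is not automatic when $F$ is infinite, whereas the paper simply invokes perfectness of $F$, which is both cleaner and more general (your one-step rescaling is otherwise tidier than the paper's two-step $\mu$-then-$\delta$ version); and your quick sketch of Fong's lemma implicitly assumes the invariant form is already symmetric, which needs the symmetrization step carried out in the paper's Lemma~\ref{L:Fong}.
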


We will refer to $\theta$ as the canonical irreducible Brauer character of $G$ lying over $\varphi$. The module form of this theorem is stated and proved in Theorem~\ref{T:self_dual_extends}~below.

Replacing the normal subgroup $N$ by a subnormal subgroup $H$, part (ii) of Theorem \ref{T:real_extends} still holds. However part (iii) need not hold. Indeed the multiplicity can be arbitrarily large, as J. M\"uller has pointed out. We outline his example at the end of Section \ref{S:Clifford}.

Theorem~\ref{T:real_extends}~is similar in flavour to a result of I. M. Richards. In \cite{R} he proved that when $G/N$ has odd order, each self-dual irreducible $K$-character of $N$ extends to its stabilizer in $G$, and has a unique self-dual extension.

{\bf Example:} Let $G$ be the semi-dihedral group of order $2^n\geq16$, and let $N$ be the centre of $G$. Then $N$ has a unique, hence self-dual, non-trivial irreducible $K$-character. However all faithful irreducible $K$-characters of $G$ have degree $2$ and none of them is self-dual. So neither Theorems~\ref{T:conjugate_dual}~nor~\ref{T:real_extends} generalize to irreducible $K$-characters nor to irreducible $p$-Brauer characters, for primes $p\ne2$.

Next recall that a non-trivial irreducible $2$-Brauer character of $G$ is said to have quadratic type if the corresponding $FG$-module affords a non-zero $G$-invariant quadratic form. Our first application is:

\begin{Theorem}\label{T:non_quadratic}
Let $\theta$ be a non-quadratic type self-dual irreducible $2$-Brauer character of\/ $G$ which does not lie over the trivial character of\/ $N$. Then $\theta{\downarrow_N}$ is a sum of non-quadratic type self-dual irreducible Brauer characters of\/ $N$, each occurring with multiplicity $1$.
\end{Theorem}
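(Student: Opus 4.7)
Let $V$ denote the $FG$-module affording $\theta$, equipped with its non-degenerate $G$-invariant bilinear form $\beta$. The non-quadratic-type hypothesis says $\beta$ admits no $G$-invariant quadratic refinement, and the hypothesis on $N$ says the trivial $FN$-module is not a constituent of $V{\downarrow_N}$. Since $V$ is irreducible, the $N$-isotypic components $H_W$ of $V{\downarrow_N}$ are transitively permuted by $G$. Three things must be proved about each irreducible $FN$-constituent $W$ of $V{\downarrow_N}$: (a)~that it is self-dual, (b)~that it has multiplicity one, and (c)~that it is itself of non-quadratic type.

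For (a), I would argue by contradiction. If no constituent is self-dual, then by Theorem~\ref{T:conjugate_dual} the $G$-orbit of constituents is closed under duality and, lacking fixed points, partitions into dual pairs $\{W,W^*\}$. On each pair, $\beta$ restricts to a hyperbolic pairing between $H_W$ and $H_{W^*}$, vanishing on either summand by Schur's lemma. The function $q\colon V\to F$ defined on $v=\sum_{\{W,W^*\}}(v_W+v_{W^*})$ by $q(v):=\sum_{\{W,W^*\}}\beta(v_W,v_{W^*})$ is then a quadratic form with polar form $\beta$; symmetry of $\beta$ makes each summand insensitive to the interchange of $W$ and $W^*$, so $q$ is $G$-invariant, contradicting non-quadratic type of $V$.

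For (b) and (c), fix a self-dual constituent $W$ of multiplicity $e$ and let $T:=G_W$. Let $U$ be the Clifford correspondent of $V$ over $W$, so $U$ is an irreducible $FT$-module with $U{\downarrow_N}\cong W^e$ and $V\cong U{\uparrow^G}$. Uniqueness of Clifford correspondents forces $U\cong U^*$, and compatibility of induction with bilinear and quadratic forms shows $V$ is of non-quadratic type if and only if $U$ is. Applying Theorem~\ref{T:real_extends}(ii)(iii) to the pair $(T,N)$: if $W$ occurs in $U{\downarrow_N}$ with odd multiplicity, then $U$ is the canonical self-dual irreducible $FT$-module over $W$, whence $e=1$. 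Thus (b) reduces to showing $e$ is odd; once (b) is known, $U$ is the unique self-dual extension $\tilde W$ of $W$ afforded by Theorem~\ref{T:real_extends}(i), so (c) reduces to showing that quadratic type of $W$ would propagate to $\tilde W$ and, by induction, to a $G$-invariant quadratic refinement of $\beta$, contradicting non-quadratic type of $\theta$.

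The main obstacle is ruling out even $e$. Writing $U\cong\tilde W\otimes_F X$ for an irreducible self-dual $F[T/N]$-module $X$ of dimension $e$, so that $\beta_U=\beta_{\tilde W}\otimes\beta_X$, the task is to show that if $e$ is even the tensor form admits a $T$-invariant quadratic refinement, contradicting non-quadratic type of $U$. The intuition is that an even-dimensional self-dual $X$ necessarily contributes a hyperbolic piece which combines with $\beta_{\tilde W}$ to yield a quadratic form on $\tilde W\otimes X$; making this rigorous is a delicate characteristic-$2$ tensor-product analysis and constitutes the technical heart of the proof. A parallel tensor-product argument handles (c) by propagating quadratic type of $W$ through the unique self-dual extension $\tilde W$ given by Theorem~\ref{T:real_extends}(i).
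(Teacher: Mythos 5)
Your proposal reproduces the paper's overall skeleton exactly: decompose $V{\downarrow_N}$, prove the constituents are self-dual, prove the multiplicity is $1$, and prove each constituent is non-quadratic. Part (a) you handle with a nice, self-contained argument: the isotypic decomposition $V=\bigoplus H_W$, the Schur-lemma observation that $\beta$ pairs $H_W$ with $H_{W^*}$ and nothing else, and the resulting hyperbolic form $q(v)=\sum_{\{W,W^*\}}\beta(v_W,v_{W^*})$. That computation does go through (well-defined by symmetry, polarizes to $\beta$, $G$-invariant since $g$ permutes the pairs), and it is a more direct alternative to the route taken in the paper, which instead passes to the extended stabilizer $T^*$ via Clifford theory (Proposition~\ref{P:hyperbolic_quadratic}) and induces a hyperbolic form from there. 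Both achieve the same thing; yours avoids $T^*$, theirs isolates a standalone proposition they can reuse.

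Parts (b) and (c), however, are acknowledged gaps, and you have not filled them. For (b), you correctly reduce to the claim that $\beta_{\tilde W}\otimes\beta_X$ admits a $T$-invariant quadratic refinement when $X$ is non-trivial self-dual, but the ``intuition'' you offer — that $X$ contributes a hyperbolic piece — is not the mechanism and would not easily be made rigorous: $X$ need not contain any $T$-invariant Lagrangian. The actual device is the Sin--Willems result (see the Remark following Proposition~\ref{P:even_quadratic_type} and \cite[Prop.~3.4]{SW}): in characteristic $2$, for \emph{any} two non-degenerate alternating forms $B_U$, $B_V$ on $U$, $V$, the form $B_U\otimes B_V$ on $U\otimes V$ has a canonical quadratic refinement $Q$ vanishing on basic tensors, and $Q$ is automatically $G$-invariant because its defining properties are $G$-invariant. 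Non-triviality of $X$ matters only via Fong's Lemma, which guarantees $X$ carries such an alternating form. For (c), the propagation of a quadratic form on $W$ up to its self-dual extension $\tilde W$ is not ``a parallel tensor argument''; it rests on Lemma~\ref{L:unique_quadratic} (an $N$-invariant quadratic form polarizing to a fixed $B$ is unique when $V$ has no trivial quotient), which forces $q^t=q$ for all $t\in T$, and then on the explicit induced-form construction $q{\uparrow^G}$ from Proposition~\ref{P:canonical_quadratic_type}. Without these two ingredients, your (b) and (c) are not proofs but restatements of what must be shown.
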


The second application is to blocks. For undefined notation, see Section \ref{S:blockCovering} below and for a full exposition of block theory, see Chapter 5 of \cite{NT}.

Let $B$ be a $2$-block of irreducible $K$-characters of $G$. Then the duals of the characters in $B$ form another $2$-block $B^o$, called the contragredient of $B$. We say that $B$ is real if $B=B^o$. Recall that $B$ is said to cover a $2$-block $b$ of $N$ if the restriction of an irreducible character in $B$ contains an irreducible character in $b$, and that $B$ is said to be weakly regular (with respect to $N$) if it has maximal defect among the blocks of $G$ which cover $b$.

\begin{Theorem}\label{T:block_covering}
Let $b$ be a $2$-block of $N$. Then
\begin{itemize}
 \item[(i)] $G$ has a real weakly regular $2$-block covering $b$ if and only if\/ $b$ is $G$-conjugate to $b^o$.
 \item[(ii)] $G$ has a unique real weakly regular $2$-block covering $b$ if\/ $b=b^o$.
\end{itemize}
\end{Theorem}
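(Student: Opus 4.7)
My plan is to translate the block-theoretic statement into one about irreducible $2$-Brauer characters and then apply Theorems~\ref{T:conjugate_dual}~and~\ref{T:real_extends}. I will use two auxiliary facts: first, a $2$-block of any finite group is real if and only if it contains a self-dual irreducible $2$-Brauer character (a classical feature of characteristic $2$); second, $B$ is a weakly regular $2$-block of $G$ covering $b$ if and only if $B$ contains an irreducible Brauer character $\theta$ lying over some $\varphi\in\op{IBr}(b)$. A routine observation is also needed: a $2$-block of $G$ covering $b$ automatically covers every $G$-conjugate of $b$, so both assertions depend only on the $G$-orbit of $b$. Whenever $b$ is $G$-conjugate to $b^o$ I may therefore reduce to the case $b=b^o$.

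For part~(i), the forward direction starts with a real weakly regular $B$ covering $b$. The first auxiliary fact yields a self-dual $\theta\in\op{IBr}(B)$. Any irreducible Brauer constituent of $\theta{\downarrow_N}$ lies in a block of $N$ covered by $B$, hence in a $G$-conjugate of $b$; after $G$-conjugating we have $\varphi\in\op{IBr}(b)$ lying under $\theta$, and Theorem~\ref{T:conjugate_dual} gives $\varphi$ $G$-conjugate to $\ov\varphi$. Passing to blocks, $b$ is $G$-conjugate to $b^o$. For the reverse direction (and for the existence half of~(ii)), reduce to $b=b^o$, apply the first auxiliary fact to $b$ to get a self-dual $\varphi\in\op{IBr}(b)$, and then apply Theorem~\ref{T:real_extends}(ii) to obtain a self-dual $\theta\in\op{IBr}(G)$ with $\varphi$ of odd (indeed unit) multiplicity in $\theta{\downarrow_N}$. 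Its block $B$ is then real by the first auxiliary fact and weakly regular covering $b$ by the second.

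For the uniqueness in~(ii), let $B'$ be a second real weakly regular $2$-block of $G$ covering $b$, and take a self-dual $\theta'\in\op{IBr}(B')$. The plan is to extract a self-dual constituent $\varphi'$ of $\theta'{\downarrow_N}$ lying in $\op{IBr}(b)$, after which the uniqueness clause of Theorem~\ref{T:real_extends}(ii) identifies $\theta'$ as the canonical self-dual character over $\varphi'$, while a separate verification that the canonical block is independent of the choice of self-dual $\varphi\in\op{IBr}(b)$ (using that distinct self-dual constituents must sit in the same block on account of the multiplicity-one statement in Theorem~\ref{T:real_extends}(iii)) will give $B'=B$.

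The principal obstacle is producing that self-dual constituent $\varphi'$: Theorem~\ref{T:conjugate_dual} only ensures that constituents of $\theta'{\downarrow_N}$ are $G$-conjugate to their duals, not self-dual. I anticipate needing a descent through the stabilizer of $\varphi'$ in $G$, exploiting the self-dual extension of Theorem~\ref{T:real_extends}(i), together with a parity argument on the multiplicities with which dual pairs of non-self-dual Brauer characters occur in the Clifford decomposition of $\theta'{\downarrow_N}$. The multiplicity-one conclusion of Theorem~\ref{T:real_extends}(iii) is what prevents all contributions from cancelling in pairs, and this is the step that genuinely uses the characteristic-$2$ hypothesis.
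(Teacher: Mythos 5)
Your second ``auxiliary fact'' is false as stated: for $N$ normal, ``$B$ contains a $\theta\in\op{IBr}(B)$ lying over some $\varphi\in\op{IBr}(b)$'' is simply the assertion that $B$ covers $b$, and in general many blocks of $G$ cover $b$ without being weakly regular (already with $N=1$, \emph{every} block of $G$ covers $b$, but only those of full defect are weakly regular). Consequently, in the reverse direction of (i), after producing the canonical self-dual $\theta$ over $\varphi$ via Theorem~\ref{T:real_extends}, you have not shown that its block is weakly regular over $b$ --- and that is precisely the content that needs proof. The paper's Lemma~\ref{L:block_covering_part1} sidesteps this entirely: through a computation with block idempotent coefficients $\beta(e_B,C)$ and central characters $\omega_B$ it shows that the number of weakly regular $2$-blocks of $G$ covering $b$ is odd, so the contragredient involution on this odd-sized set has a fixed point.

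For the uniqueness in (ii), the step you flag as the ``principal obstacle'' is fatal to your plan. Given a self-dual $\theta'\in\op{IBr}(B')$, Clifford theory gives $\theta'{\downarrow_N}=e(\varphi_1+\dots+\varphi_s)$ with one multiplicity $e$ common to all constituents; if $e$ is even you cannot invoke Theorem~\ref{T:real_extends}(ii) for \emph{any} constituent, and no pairing of dual non-self-dual constituents can change the parity of $e$. You have no mechanism forcing $e$ to be odd for an arbitrary self-dual $\theta'$ in a real weakly regular block. The paper instead avoids any single irreducible: Lemma~\ref{L:invariant_real_height0} (which rests on the odd height-zero count of Lemma~\ref{L:odd_height0} applied to $b$) builds a possibly reducible self-dual Brauer character $\phi$ of $G$, vanishing off $N$, with $\phi{\downarrow_N}=e(\theta_1+\dots+\theta_t)$ and $et$ odd, where the $\theta_i$ are a $G$-orbit of self-dual height-$0$ characters of $b$ satisfying $\Phi_{\theta_i}(1)_2=|N|_2$; Lemma~\ref{L:block_covering_part2} then computes $(\phi_B(1)/\theta_1(1))^*=1_F$ via the identity $\beta(e_B,C)\omega_B(C^+)=\beta(e_b,C)\omega_b(C^+)$, extracts a self-dual $\mu\in\op{IBr}(B)$ over $\theta_1$ with odd restriction multiplicity, and uses the uniqueness of the canonical character to pin down $B$. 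Incidentally your first auxiliary fact, while true, is not folklore: it is exactly the involution argument applied to the odd set in Lemma~\ref{L:odd_height0}. In summary, your forward direction of (i) is sound, but the existence half of (i) and all of (ii) rest on a false equivalence and an unfilled parity gap; the genuine proof requires the height-zero counting and block-idempotent arithmetic that your sketch does not supply.
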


We prove (i) in Lemma~\ref{L:block_covering_part1} and (ii) in Lemma~\ref{L:block_covering_part2}.

Recall that corresponding to each irreducible $2$-Brauer character $\theta$, $G$ has a principal indecomposable character $\Phi_\theta$. Then $\Phi_\theta$ is a $K$-character of $G$ which vanishes off the $2$-regular elements of $G$. We use the following result, which is implicit in \cite[1.4]{GW93}, to prove Theorem \ref{T:block_covering}. As it may be of independent interest, we include a short proof here:

\begin{Lemma}\label{L:odd_height0}
Let $B$ be a $2$-block of\/ $G$. Then $B$ has an odd number of height $0$ irreducible Brauer characters $\theta$ such that $\Phi_\theta(1)_2=|G|_2$.
\begin{proof}
Let $D$ be a defect group of $B$. Then Brauer showed that $\frac{\op{dim}(B)}{|G||G:D|}$ is a unit in $R$. See \cite[5.10.1]{NT}. Now $\op{dim}(B)=\sum_{\theta\in\op{IBr}(b)}\Phi_\theta(1)\theta(1)$. It is known that $\Phi_\theta(1)/|G|$ and $\theta(1)/|G:D|$ belong to $R$, for all $\theta\in\op{IBr}(B)$. So Brauer's result gives us an identity in $F$:
$$
\sum_{\theta\in\op{IBr}(B)}\left(\frac{\Phi_\theta(1)}{|G|}\right)^*\left(\frac{\theta(1)}{|G:D|}\right)^*=\left(\frac{\op{dim}(B)}{|G||G:D|}\right)^*=1_F.
$$
The contribution of $\theta$ to the left hand side is $1_F$, if $\theta$ has height $0$ and $\Phi_\theta(1)_2=|G|_2$. Otherwise the contribution is $0_F$. So the lemma follows directly from the above equality.
\end{proof}
\end{Lemma}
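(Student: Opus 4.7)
The plan is to combine the dimension decomposition $\op{dim}(B)=\sum_{\theta\in\op{IBr}(B)}\Phi_\theta(1)\theta(1)$ with Brauer's classical estimate that $\op{dim}(B)/(|G||G:D|)$ is a $2$-adic unit, where $D$ is a defect group of $B$.

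First I would record two standard $2$-divisibility facts. For any $\theta\in\op{IBr}(G)$, $|G|_2$ divides $\Phi_\theta(1)$, since each projective indecomposable $FG$-module has dimension divisible by $|G|_2$; and for any $\theta\in\op{IBr}(B)$, $|G:D|_2$ divides $\theta(1)$, the usual defect-theoretic lower bound on the degrees of irreducible Brauer characters in $B$. Consequently both $\Phi_\theta(1)/|G|$ and $\theta(1)/|G:D|$ lie in $R$ for each $\theta\in\op{IBr}(B)$.

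Dividing the dimension identity through by $|G||G:D|$ and applying the reduction map $r\mapsto r^*$ then produces an equality in $F$ of the form $1_F=\sum_\theta(\Phi_\theta(1)/|G|)^*(\theta(1)/|G:D|)^*$. A summand is nonzero in $F$ precisely when both factors are $2$-adic units, i.e.\ when $\Phi_\theta(1)_2=|G|_2$ and $\theta(1)_2=|G:D|_2$; the latter is exactly the height-zero condition. Since $F$ has characteristic $2$, each surviving summand equals $1_F$, so the number of such $\theta$ must be odd.

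The only substantive input is Brauer's unit theorem for $\op{dim}(B)/(|G||G:D|)$, which I would cite from \cite[5.10.1]{NT}; everything else is a parity count in characteristic $2$, and I do not anticipate any real obstacle beyond invoking the correct divisibility statements.
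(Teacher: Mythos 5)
Your proposal is correct and follows exactly the paper's own argument: divide the identity $\op{dim}(B)=\sum_\theta\Phi_\theta(1)\theta(1)$ by $|G||G:D|$, reduce modulo $J(R)$ using Brauer's unit theorem from \cite[5.10.1]{NT}, and count the nonzero summands, which are precisely the height-zero $\theta$ with $\Phi_\theta(1)_2=|G|_2$.
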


Many 2-blocks have an odd number of height $0$ irreducible Brauer characters. For example, the main result of \cite{KOW} is that each 2-block of a symmetric group has a unique height $0$ irreducible Brauer character. Furthermore, it is known that each principal indecomposable character $\Phi$ of a finite solvable group satisfies $\Phi(1)_2=|G|_2$. So Lemma \ref{L:odd_height0} implies that each $2$-block of a finite solvable group has an odd number of height $0$ irreducible Brauer characters. However, as B. Sambale has pointed out, the faithful $2$-block of $3.{\rm Suz}.2$ has four height $0$ irreducible Brauer characters. Three of these satisfy the condition of Lemma~\ref{L:odd_height0} on their principal indecomposable character degree. T. Wada and J. M\"uller have independently noted this example, and the fact that it is unique in the \cite{MA} database.

\section{Real orbits of irreducible Brauer characters}

\noindent Recall that $g\in G$ is said to be real in $G$ if $xgx^{-1}=g^{-1}$, for some $x\in G$.  Similarly a conjugacy class of $G$ is real if its elements are real, and $2$-regular if its elements have odd order. Now $G$ acts on the conjugacy classes, the irreducible $K$-characters and the Brauer characters of its normal subgroup $N$. We say that a $G$-orbit of conjugacy classes of $N$ is real if its union is a real conjugacy class of $G$. Likewise we say that a $G$-orbit of irreducible Brauer characters of $N$ is real if it contains the duals of its characters.

\begin{proof}[Proof of Theorem \ref{T:conjugate_dual}]
It is clear that each self-dual irreducible Brauer character of $G$ lies over a real $G$-orbit of irreducible Brauer characters of $N$.

Suppose that $G$ has $\ell$ conjugacy classes of $2$-regular elements, with representatives $g_1,\dots,g_\ell$. Let $\theta_1,\dots,\theta_\ell$ be the irreducible Brauer characters of $G$ and let $\Phi_1,\dots\Phi_\ell$ be the corresponding principal indecomposable characters of $G$. The second orthogonality relations give
$$
\sum_{\chi\in\op{Irr}(G)}\chi(g_i^{-1})\chi(g_j)=\delta_{i,j}|\op{C}_G(g_i)|,\quad\mbox{for all $i,j\in\{1,\dots,\ell\}$.}
$$
Now for all $\chi\in\op{Irr}(G)$, we have $\chi(g_j)=\sum_{u=1}^\ell d_{\chi,\theta_u}\theta_u(g_j)$, where the $d_{\chi,\theta_u}$ are non-negative integers, called decomposition numbers. Then for all $u=1,\dots,\ell$, we have $\Phi_u(g_i)=\sum_{\chi\in\op{Irr}(G)}d_{\chi,\theta_u}\chi(g_i)$. It is known that $\frac{\Phi_u(g_i^{-1})}{|\op{C}_G(g_i)|}\in R$, for all $u,i$. So the above displayed equation can be rewritten in $R$ as 
\begin{equation}\label{E:Phi_theta}
\sum_{u=1}^\ell\frac{\Phi_u(g_i^{-1})}{|\op{C}_G(g_i)|}\theta_u(g_j)=\delta_{i,j},\quad\mbox{for all $i,j\in\{1,\dots,\ell\}$.}
\end{equation}
In particular the Brauer character table $[\theta_i(g_j)]$ of $G$ is a non-singular $\ell\times\ell$ matrix.\\ (We note that the proof of \cite[(2.18)]{N} shows that $\op{det}[\theta_i(g_j)]^2 = \pm\prod_{j=1}^\ell|\op{C}_G(g_j)|_{2'}$.)

Suppose that $G$ has $r$ real conjugacy classes of $2$-regular elements, which we may assume have representatives $g_1,\dots,g_r$. We choose notation so that $\theta_1,\dots,\theta_r$ are the self-dual irreducible Brauer characters of $G$. Then the self-dual Brauer character table of $G$ is the $r\times r$ submatrix $T:=[\theta_i(g_j)]$ of the Brauer character table. Suppose that $u\in\{r+1,\dots,\ell\}$. Then there is a unique $\ov{u}\in\{r+1,\dots,\ell\}$, with $\ov u\ne u$, such that $\theta_{\ov{u}}=\ov\theta_u$. So $\frac{\Phi_u(g_i^{-1})}{|\op{C}_G(g_i)|} \theta_u(g_j) = \frac{\Phi_{\ov{u}} (g_i^{-1})}{|\op{C}_G(g_i)|} \theta_{\ov{u}}(g_j)$, for all $i,j$. So the contribution of the summands indexed by $u$ and $\ov{u}$ to \eqref{E:Phi_theta} is $0$ mod $J(R)$, and we deduce that
$$
\sum_{u=1}^r\frac{\Phi_u(g_i^{-1})}{|\op{C}_G(g_i)|}\theta_u(g_j)\equiv\delta_{i,j}\quad\mbox{mod $J(R)$},\quad\mbox{for all $i,j\in\{1,\dots,r\}$.}
$$
As $R$ is a local ring, it follows that $T$ is invertible, with inverse congruent to the $r\times r$-matrix $\left[\frac{\Phi_i(g_j^{-1})}{|\op{C}_G(g_j)|}\right]$ mod $J(R)$. In particular $\op{det}(T)\not\in J(R)$.

Now suppose that $G$ has $t$ real conjugacy classes of $2$-regular elements which are contained in $N$, with representatives $n_1,\dots,n_t$. We relabel the $\theta_1,\dots,\theta_r$ so that the $t\times t$ submatrix $S:=[\theta_i(n_j)]$ of $T$ satisfies $\op{det}(S)\not\in J(R)$.

For $i=1,\dots,t$, let $\varphi_i$ be an irreducible Brauer character of $N$ which is a constituent of $\theta_i{\downarrow_N}$, and set $\hat\varphi_i$ as the sum of the distinct $G$-conjugates of $\varphi_i$. Then $\theta_i{\downarrow_N}=e_i\hat\varphi_i$, for some positive integer $e_i$. The non-singularity of $S$ implies that all the multiplicities $e_1,\dots,e_t$ are odd and $\varphi_1,\dots,\varphi_t$ lie in distinct $G$-orbits. Moreover each of these orbits is real, as each $\theta_i$ is self-dual.

By the non-singularity of the Brauer character table of $N$ and Brauer's permutation lemma, $G$ has $t$ real orbits on the irreducible Brauer characters of $N$. So $\varphi_1,\dots,\varphi_t$ represent all real $G$-orbits of irreducible Brauer characters of $N$.

Our work above shows that if $\varphi$ is an irreducible Brauer character of $N$ which is $G$-conjugate to $\ov\varphi$, then $G$ has a self-dual irreducible Brauer character $\theta$ such that $\varphi$ occurs with odd multiplicity in $\theta{\downarrow_N}$. This concludes the proof.
\end{proof}

\section{Clifford Theory for self-dual irreducible modules}\label{S:Clifford}

\noindent We prove Theorem \ref{T:real_extends} in this section. Recall that the dual of an $FG$-module $V$ is the $FG$-module $V^*:=\op{Hom}_F(V,F)$. We say that $V$ is self-dual if $V\cong V^*$ as $FG$-modules. For the reader's convenience, we begin by stating a module version of Clifford's Theorem:

\begin{Lemma}[Clifford 1937]\label{L:Clifford}
Let $F$ be an arbitary field, let $V$ be an irreducible $FG$-module and let $W$ be an irreducible submodule of\/ $V{\downarrow_N}$. Set $T$ as the stabilizer of\/ $W$ in $G$. Then
\begin{enumerate}
\item[(i)] $V{\downarrow_N}\cong e(W_1\oplus\dots\oplus W_n)$ for some integer $e>0$, where $W_1,\dots,W_n$ are the distinct $G$-conjugates of\/ $W$. In particular $V{\downarrow_N}$ is semi-simple.
\item[(ii)] Let $U$ be the sum of all submodules of\/ $V{\downarrow_N}$ which are isomorphic to $W$. Then $U$ is an irreducible $FT$-module, $U{\downarrow_N}=eW$ and $U{\uparrow^G}=V$.
\item[(iii)] If\/ $W$ is absolutely irreducible, it extends to a projective $FT$-module $\hat W$ and $U\cong P\otimes\hat W$, for some projective $F(T/N)$-module $P$.
\item[(iv)] If\/ $W$ extends to an $FT$-module $\hat W$, then the distinct irreducible $FG$-modules lying over $W$ are $(S_1\otimes \hat W) {\uparrow^G}, \dots, (S_t\otimes \hat W){\uparrow^G}$ where $S_1,\dots,S_t$ are the distinct irreducible $F(T/N)$-modules.
\end{enumerate} 

\begin{proof}
See for example Huppert and Blackburn, Finite Groups II, VII, 9.12.
\end{proof}	
\end{Lemma}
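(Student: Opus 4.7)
For part (i), I would first note that for any $g \in G$ the translate $g\cdot W$ is still an irreducible $FN$-submodule of $V{\downarrow_N}$, since $N$ is normal in $G$. The $FG$-submodule $\sum_{g\in G} g\cdot W$ must then equal $V$ by irreducibility, so $V{\downarrow_N}$ is a sum of irreducibles and hence semisimple. Since $G$ acts transitively on the isotypic components of $V{\downarrow_N}$ (otherwise the union of one orbit would be a proper $FG$-submodule), it gives $F$-linear isomorphisms between components indexed by distinct $G$-conjugates of $W$, forcing all components to have the same multiplicity $e$. This yields $V{\downarrow_N} \cong e(W_1 \oplus \cdots \oplus W_n)$.

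For (ii), let $U$ be the $W$-isotypic component of $V{\downarrow_N}$. Since $T$ is the stabilizer of the isomorphism class of $W$, it stabilizes $U$, making $U$ an $FT$-module with $U{\downarrow_N} = eW$. Choosing coset representatives $g_1,\dots,g_n$ for $T$ in $G$, the submodules $g_i\cdot U$ are the distinct isotypic components of $V{\downarrow_N}$ and their direct sum is $V$. The natural surjection $U{\uparrow^G} = FG\otimes_{FT} U \to V$ is therefore an isomorphism by dimension count, and irreducibility of $U$ as an $FT$-module follows since any proper $FT$-submodule $U'\subsetneq U$ would induce to a proper $FG$-submodule of $V$.

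For (iii), absolute irreducibility of $W$ gives $\op{End}_{FN}(U) \cong M_e(F)$; conjugation by $T$ (trivial on $N$) supplies a projective representation $T/N \to \op{PGL}_e(F)$ of some class $\alpha \in H^2(T/N, F^\times)$. Lifting this yields a projective $FT$-module $\hat W$ extending $W$, and taking $P$ to be an $F(T/N)$-module twisted by the inverse cocycle produces a genuine $FT$-module $P\otimes\hat W$ which one checks equals $U$ by comparing restrictions to $N$ and the $T$-actions on isotypic components. For (iv), if $W$ already extends to an honest $FT$-module $\hat W$, each tensor product $S_i\otimes\hat W$ (with $S_i$ an irreducible $F(T/N)$-module inflated to $T$) is irreducible over $FT$ because $\op{End}_{FT}(S_i\otimes\hat W) \cong \op{End}_{F(T/N)}(S_i)$; by part (ii) the inductions $(S_i\otimes\hat W){\uparrow^G}$ are pairwise non-isomorphic irreducible $FG$-modules over $W$, and a dimension count using the regular representation of $T/N$ shows they exhaust the irreducibles over $W$. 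The main obstacle is the projective-representation bookkeeping in part (iii), where one must identify the cocycle of the $T$-action on $U$ and split it as a product of cocycles on $\hat W$ and $P$; the other parts reduce to direct manipulation of isotypic components and tensor products.
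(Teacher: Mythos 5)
The paper supplies no proof of this lemma---it simply cites Huppert and Blackburn, \emph{Finite Groups II}, VII.9.12---so your sketch must stand on its own. Your arguments for (i) and (ii) are correct and standard: $\sum_{g\in G}gW$ is a nonzero $FG$-submodule of $V$, hence all of $V$, so $V{\downarrow_N}$ is semisimple; transitivity of $G$ on the isotypic components gives the common multiplicity $e$; $U{\uparrow^G}\to V$ is onto and an isomorphism by dimension count; and a proper $FT$-submodule of $U$ would induce up to a proper $FG$-submodule of $V$, so $U$ is irreducible over $FT$.

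Part (iii) has a genuine mix-up. The projective representation $T/N\to\op{PGL}_e(F)$ you extract from the conjugation action of $T$ on $\op{End}_{FN}(U)\cong M_e(F)$ lives on an $e$-dimensional space; lifting it gives the module $P$, not $\hat W$, since $\hat W$ must have dimension $\dim W$. The extension $\hat W$ has to be built separately, by choosing for each $t\in T$ an $F$-linear automorphism $Y(t)$ of $W$ intertwining the $N$-action with its $t$-conjugate; absolute irreducibility makes each $Y(t)$ unique up to a scalar, and the resulting cocycle is inverse to the one carried by $\op{End}_{FN}(U)$, which is precisely why $P\otimes\hat W$ becomes a genuine $FT$-module. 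Your sentence ``Lifting this yields a projective $FT$-module $\hat W$ extending $W$'' is where the argument goes wrong.

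In part (iv), the identification $\op{End}_{FT}(S_i\otimes\hat W)\cong\op{End}_{F(T/N)}(S_i)$ quietly assumes $\op{End}_{FN}(W)=F$, i.e.\ that $W$ is absolutely irreducible, which is not a stated hypothesis of (iv). Without it the modules $S_i\otimes\hat W$ need not be pairwise non-isomorphic: take $F=\mathbb{R}$, $N=C_3$, $T=G=S_3$, and $W$ the $2$-dimensional irreducible $\mathbb{R}C_3$-module, so $\op{End}_{FN}(W)\cong\mathbb{C}$; then $W$ extends to the $2$-dimensional irreducible $\hat W$ of $S_3$ and $\hat W\otimes\op{sgn}\cong\hat W$, so the trivial and sign modules of $T/N\cong C_2$ yield isomorphic modules over $W$. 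This does not affect the paper, which applies the lemma only over splitting fields, but your proof of (iv) should either state the absolute irreducibility assumption explicitly or replace $F(T/N)$ by the twisted algebra that records the $T/N$-action on $\op{End}_{FN}(W)$.
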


Next we point out that the following result, generally known as Fong's Lemma, holds for an arbitrary field $F$ of characteristic $2$. In particular $F$ need not be perfect:

\begin{Lemma}\label{L:Fong}
Let $G$ be a finite group, let $F$ be an arbitrary field of characteristic $2$ and let $V$ be a non-trivial self-dual irreducible $FG$-module. Then $V$ affords a non-degenerate $G$-invariant alternating bilinear form. In particular $\op{dim}(V)$ is even.
\begin{proof}
The self-duality of $V$ is equivalent to the existence of a non-degenerate $G$-invariant bilinear form $B:V\times V\rightarrow F$. If $B$ is not symmetric, set $\hat B(v_1,v_2)=B(v_1,v_2)+B(v_2,v_1)$, for all $v_1,v_2\in V$. Then $\hat B$ is a $G$-invariant non-zero symmetric bilinear form on $V$. Now $\op{rad}(\hat B)$ is a submodule of $V$ and $V$ is irreducible. So $\op{rad}(\hat B)=0$ and $\hat B$ is non-degenerate.

The first paragraph shows that $V$ affords a $G$-invariant non-degenerate symmetric bilinear form, henceforth denoted $B$. We claim that $B$ is alternating, meaning $B(v,v)=0$, for all $v\in V$. For suppose otherwise. Set $Q(v):=B(v,v)$, for all $v\in V$. Then $Q$ is a non-zero $G$-invariant quadratic form on $V$. Now $Q$ is additive, as for all $v_1,v_2\in V$, we have
$$
\begin{aligned}
Q(v_1+v_2)	&=B(v_1+v_2,v_1+v_2)\\
			&=B(v_1,v_1)+B(v_1,v_2)+B(v_2,v_1)+B(v_2,v_2)\\
			&=Q(v_1)+Q(v_2),\quad\mbox{using $B(v_1,v_2)+B(v_2,v_1)=0$.}
\end{aligned}
$$
Moreover $Q(\lambda v)=\lambda^2 Q(v)$, for all $\lambda\in F$ and $v\in V$. Define $U:=\{v\in V\mid Q(v)=0\}$. Then our work shows that $U$ is a submodule of $V$. But $U\ne V$, as $Q\ne0$. So $U=0$, by irreducibility of $V$. Let $v\in V$ and $g\in G$. Then $Q(gv+v)=Q(gv)+Q(v)=0$, as $Q$ is additive and $G$-invariant. So $gv+v\in U$, whence $gv=v$. But then $G$ acts trivially on $V$, contrary to hypothesis. This proves our claim.

The final statement follows as every symplectic vector space has even dimension.
\end{proof}
\end{Lemma}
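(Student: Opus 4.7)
The plan is to promote self-duality to a non-degenerate symmetric $G$-invariant bilinear form on $V$, and then show that such a form is automatically alternating in the present setting.

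First, self-duality of $V$ is equivalent to the existence of a non-degenerate $G$-invariant bilinear form $B\colon V\times V\to F$. If $B$ is not already symmetric, I replace it by its symmetrization $\hat B(u,v):=B(u,v)+B(v,u)$, which is non-zero, symmetric, and $G$-invariant. Its radical $\op{rad}(\hat B)$ is an $FG$-submodule of $V$; since $V$ is irreducible and $\hat B\ne 0$, this radical must be zero, so $\hat B$ is non-degenerate. Renaming, I may assume from the outset that $B$ itself is a non-degenerate symmetric $G$-invariant bilinear form on $V$.

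Next I introduce $Q(v):=B(v,v)$. The characteristic-two point is that symmetry of $B$ gives $B(u,v)+B(v,u)=2B(u,v)=0$, so expanding $B(u+v,u+v)$ yields $Q(u+v)=Q(u)+Q(v)$. Together with $Q(\lambda v)=\lambda^{2}Q(v)$ and the $G$-invariance of $B$, this shows the set $U:=\{v\in V\mid Q(v)=0\}$ is closed under addition, under scalar multiplication, and under the $G$-action, so $U$ is an $FG$-submodule of $V$.

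By irreducibility $U$ is either $V$ or $0$. If $U=V$ then $Q\equiv 0$, which is exactly the statement that $B$ is alternating, and the final assertion follows because every non-degenerate alternating form makes $V$ symplectic, and symplectic spaces have even dimension. If instead $U=0$, I derive a contradiction: additivity of $Q$, $G$-invariance and characteristic two give $Q(gv-v)=Q(gv)+Q(v)=2Q(v)=0$ for every $g\in G$ and $v\in V$, forcing $gv=v$; but then $V$ would be the trivial module, contrary to hypothesis. The only delicate step is verifying that $U$ is closed under scaling when $F$ is not perfect, which is handled cleanly by the identity $Q(\lambda v)=\lambda^{2}Q(v)$; this is precisely why the argument goes through for an arbitrary field of characteristic two rather than requiring $F$ to be perfect.
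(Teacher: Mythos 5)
Your proof is correct and follows essentially the same route as the paper: symmetrize the bilinear form, consider $Q(v)=B(v,v)$, observe that its zero set $U$ is a submodule, and use irreducibility together with the fact that $Q(gv+v)=0$ would force $G$ to act trivially. The only cosmetic difference is that you present the dichotomy $U=V$ or $U=0$ directly rather than first assuming $Q\neq 0$ for contradiction.
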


\begin{Corollary} \label{L:Radical}
Let $G$ be a finite group and let $F$ be an arbitrary field of characteristic $2$. Then the radical of $FG$ has odd codimension in $FG$.
\end{Corollary}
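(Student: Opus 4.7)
The plan is to apply Artin--Wedderburn to $FG/J(FG)$ and then run a parity argument using the duality $V\mapsto V^*$, with Fong's Lemma taking care of the self-dual terms. Let $V_1,\dots,V_s$ be representatives for the isomorphism classes of irreducible $FG$-modules, set $D_i:=\op{End}_{FG}(V_i)$, and put $n_i:=\op{dim}_{D_i}(V_i)$. By Artin--Wedderburn, $FG/J(FG)$ is a direct sum of simple $F$-algebras whose $i$-th summand has $F$-dimension $n_i^2\op{dim}_F(D_i)$. Using the identity $\op{dim}_F(V_i)=n_i\op{dim}_F(D_i)$, this simplifies to $n_i\op{dim}_F(V_i)$, giving
$$
\op{dim}_F(FG/J(FG))=\sum_{i=1}^s n_i\op{dim}_F(V_i).
$$
The key feature of this formula is that it makes no further reference to the internal structure of the $D_i$.

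Next I would partition the $V_i$ according to the duality $V\mapsto V^*$. The trivial module contributes $1\cdot 1 = 1$ to the codimension. A non-trivial self-dual $V_i$ contributes $n_i\op{dim}_F(V_i)$, which is even because $\op{dim}_F(V_i)$ is even by Fong's Lemma (Lemma~\ref{L:Fong}). A non-self-dual $V_i$ pairs with the distinct irreducible $V_i^*$; the latter has the same $F$-dimension as $V_i$ and endomorphism algebra anti-isomorphic to $D_i$, so the same $n$, and the pair contributes $2n_i\op{dim}_F(V_i)$, which is even. Summing, the codimension is odd, as required.

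The main obstacle is one of bookkeeping rather than ideas: $F$ is neither assumed to be a splitting field nor perfect, so one must resist writing anything that depends on the internal structure of the $D_i$. Rewriting each Wedderburn summand's $F$-dimension as $n_i\op{dim}_F(V_i)$ is the manoeuvre that sidesteps this, after which Fong's Lemma and the pairing by duality finish the parity count immediately.
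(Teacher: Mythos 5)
Your proof is correct, and it takes a genuinely different route from the paper. The paper argues in two stages: first it assumes $F$ is a splitting field, so that the codimension of the radical is $\sum_i\theta_i(1)^2$, and runs the parity argument (trivial gives $1$; non-trivial self-dual even by Fong; non-self-dual pair off); then for an arbitrary $F$ it extends scalars to $E=F(\omega)$, a finite separable splitting field, and uses the standard fact that $\op{rad}(EG)=E\otimes_F\op{rad}(FG)$ to transfer the parity back down. You sidestep the base-change step entirely by applying Artin--Wedderburn directly to $FG/J(FG)$ over $F$ itself: the observation that each Wedderburn block $M_{n_i}(D_i)$ has $F$-dimension $n_i^2\dim_F(D_i)=n_i\dim_F(V_i)$ kills the dependence on the internal structure of $D_i$, after which the same parity bookkeeping (Fong's Lemma for self-dual $V_i$, duality pairing for the rest, and $n_{i}$ matching $n_{i^*}$ since $\op{End}_{FG}(V_i^*)$ is anti-isomorphic to $\op{End}_{FG}(V_i)$) closes the argument. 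The paper's route yields the explicit Brauer-character-degree identity over splitting fields, which is reusable elsewhere; yours is shorter and stays over the given field, at the small cost of invoking the (standard) relation $\dim_F M_{n_i}(D_i)=n_i\dim_F(V_i)$. Both ultimately rest on the version of Fong's Lemma proved over an arbitrary field of characteristic $2$, which the paper supplies.
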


\begin{proof}
We use $\op{rad}(FG)$ to denote the radical of $FG$, which is the intersection of the annihilators of all irreducible $FG$-modules. 
Suppose first that $F$ is a splitting field for $G$. Let $\theta_1,\ldots,\theta_\ell$ be the irreducible Brauer characters of $G$, with $\theta_1,\ldots,\theta_r$ precisely the self-dual characters, and $\theta_1$ the trivial
Brauer character. We have 
\[
\dim (FG)-\dim(\op{rad}(FG))=\theta_1(1)^2+\cdots +\theta_\ell(1)^2.
\]
Now $\theta_1(1)=1$ and $\theta_i(1)$ is even for $2\leq i\leq r$, by Fong's Lemma. If $i>r$, we may pair $\theta_i$ with its dual, and these two characters have the same degree. It is now clear that
\[
\theta_1(1)^2+\cdots +\theta_\ell(1)^2
\]
is odd, and the result follows in this case.

Now suppose that $F$ is any field of characteristic $2$. Set $E=F(\omega)$, where $\omega$ is a primitive $|G|_{2'}$-th root of unity in an extension field of $F$. Then $E$ is a splitting field for $G$ and a finite separable extension of $F$. As $FG$ contains an $E$-basis of $EG$, it is a standard fact that $\op{rad}(EG)$ is the $E$-span of $\op{rad}(FG)$. In particular $\op{dim}_E(\op{rad}(EG))=\op{dim}_F(\op{rad}(FG))$. The first part shows that $\dim_E(EG)-\op{dim}_E(\op{rad}(EG))$ is odd. So $\dim_F (FG)-\op{dim}_F(\op{rad}(FG))$ is odd in this case also.
\end{proof} 

For the rest of this section $F$ is a perfect field of characteristic $2$. Here is the module version of Theorem \ref{T:real_extends}:

\begin{Theorem}\label{T:self_dual_extends}
Let $W$ be a self-dual irreducible $FN$-module. Then $W$ extends to its stabilizer in $G$, and there is a unique extension $\hat W$ which is self-dual.

Set $V:=\hat W{\uparrow^G}$. Then $V$ is a self-dual irreducible $FG$-module, and $V{\downarrow_N}\cong W_1\oplus\dots\oplus W_n$, where $W_1,\dots,W_n$ are the distinct $G$-conjugates of\/ $W$. Moreover $V$ is the unique self-dual irreducible $FG$-module such that $W$ occurs with odd multiplicity in $V{\downarrow_N}$.
\end{Theorem}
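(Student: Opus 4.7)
Let $T$ denote the stabilizer of $W$ in $G$. The plan has three parts: I will first show that $W$ has a unique self-dual extension $\hat W$ to $T$; then derive the stated properties of $V:=\hat W{\uparrow^G}$; finally prove uniqueness of $V$. For existence of a self-dual extension, by Fong's Lemma (Lemma~\ref{L:Fong}) $W$ carries a non-degenerate $N$-invariant alternating bilinear form $B$, unique up to scalar by Schur's lemma since $W$ is absolutely irreducible. As $W$ is $T$-stable, for each $t\in T$ I choose an $F$-linear bijection $\sigma_t$ of $W$ intertwining the $N$-action with its $t$-conjugate, and define $\alpha(s,t)\in F^\times$ by $\sigma_s\sigma_t=\alpha(s,t)\sigma_{st}$ and $\lambda(t)\in F^\times$ by $B(\sigma_t v,\sigma_t w)=\lambda(t)B(v,w)$. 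Evaluating $B(\sigma_s\sigma_t v,\sigma_s\sigma_t w)$ in two ways yields the identity $\alpha(s,t)^2\lambda(st)=\lambda(s)\lambda(t)$. Since $F$ is perfect of characteristic $2$, squaring is bijective on $F^\times$, so there is a unique $\mu(t)\in F^\times$ with $\mu(t)^2=\lambda(t)^{-1}$; the identity above then forces $\alpha(s,t)=\mu(st)/(\mu(s)\mu(t))$, so replacing $\sigma_t$ by $\mu(t)\sigma_t$ produces a genuine $FT$-module structure $\hat W$ extending $W$, on which $B$ is $T$-invariant. Hence $\hat W$ is self-dual.

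For uniqueness of the self-dual extension, by Clifford theory (Lemma~\ref{L:Clifford}(iv) applied to $T$) the set of $FT$-extensions of $W$ is a torsor over $\op{Hom}(T/N,F^\times)$, and dualization acts on this torsor. Fixing a base extension $\hat W_0$ with $\hat W_0^*\cong\hat W_0\otimes\psi_0$, dualization sends $\hat W_0\otimes\chi$ to $\hat W_0\otimes\psi_0\chi^{-1}$, and the self-dual fixed points correspond to solutions of $\chi^2=\psi_0$. Since squaring is bijective on $\op{Hom}(T/N,F^\times)$, there is a unique such $\chi$.

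Now set $V:=\hat W{\uparrow^G}$. Lemma~\ref{L:Clifford}(iv), applied with the trivial $F(T/N)$-module, gives irreducibility of $V$; Mackey's formula, combined with normality of $N$ in $G$, gives $V{\downarrow_N}\cong\bigoplus_{g\in G/T}W^g=W_1\oplus\cdots\oplus W_n$; and self-duality follows from $V^*\cong\hat W^*{\uparrow^G}\cong\hat W{\uparrow^G}=V$. For uniqueness of $V$, let $V'$ be any self-dual irreducible $FG$-module with $W$ occurring with odd multiplicity in $V'{\downarrow_N}$. By Lemma~\ref{L:Clifford}(iv), $V'\cong(S\otimes\hat W){\uparrow^G}$ for some irreducible $F(T/N)$-module $S$, and a Mackey calculation shows that the multiplicity of $W$ in $V'{\downarrow_N}$ equals $\dim(S)$. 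From $V'\cong(V')^*\cong(S^*\otimes\hat W){\uparrow^G}$ and the distinctness statement in Lemma~\ref{L:Clifford}(iv), $S\cong S^*$; since $\dim(S)$ is odd and $S$ is self-dual, Fong's Lemma applied to the $F(T/N)$-module $S$ forces $S$ to be trivial, whence $V'\cong V$.

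The main obstacle is existence of the self-dual extension $\hat W$. In general, lifting a projective representation of $T$ on $W$ (such as the one guaranteed by Lemma~\ref{L:Clifford}(iii)) to a genuine representation is obstructed by a class in $H^2(T/N,F^\times)$; the alternating form from Fong's Lemma supplies just enough extra structure to force this obstruction to be 2-torsion, and perfectness of $F$ in characteristic $2$ eliminates the 2-torsion. Once the extension is in hand, uniqueness of $\hat W$ and the properties of $V$ follow from standard Clifford theory combined with a second application of Fong's Lemma, this time to the $F(T/N)$-module $S$.
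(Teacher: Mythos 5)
Your proposal is correct and follows essentially the same strategy as the paper's proof: Fong's Lemma supplies the alternating form $B$, a projective representation together with the scalars $\lambda(t)$ measuring the failure of form-invariance satisfies $\alpha(s,t)^2\lambda(st)=\lambda(s)\lambda(t)$, perfectness of $F$ in characteristic $2$ lets one extract unique square roots to rescale into a genuine $B$-invariant representation $\hat W$, and Clifford theory plus a second application of Fong's Lemma to the $F(T/N)$-module $S$ give the properties and uniqueness of $V$. Your normalization is a little tidier than the paper's (choosing $\mu(t)^2=\lambda(t)^{-1}$ simultaneously trivializes the cocycle and makes $B$ invariant in one step, whereas the paper first kills the cocycle with $\mu(g)^2=\lambda(g)$ and then introduces an additional homomorphism $\epsilon$, which in fact is already trivial), and your uniqueness argument for $\hat W$ via the odd order of $\op{Hom}(T/N,F^\times)$ is a mild variant of the paper's, which instead applies Fong's Lemma to rule out nontrivial self-dual one-dimensional twists; both rest on the same fact that $F^\times$ has no $2$-torsion in characteristic $2$.
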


\begin{proof}
We may assume that $W$ is non-trivial and $G$-invariant. As $W$ is a self-dual $FN$-module, it affords a non-degenerate $N$-invariant bilinear form $B:W\times W\to F$. An application of Schur's Lemma shows that $B$ is unique up to scaling.

Let $X:N\rightarrow\op{GL}(W)$ be the $F$-representation given by $W$. For each $g\in G$, we define the conjugate representation $X^g$ of $N$ by
$$
X^g(n)=X(gng^{-1}),\quad\mbox{for all $n\in N$.}
$$
Then $X$ and $X^g$ are equivalent representations, as $W$ is $G$-invariant. So there is $Y(g)\in\op{GL}(W)$ such that 
$$
Y(g)X(n)=X^g(n)Y(g),\quad\mbox{for all $n\in N$.}
$$
We choose $Y(g)=X(g)$, whenever $g\in N$. Now for all $g,h\in G$, we have
$$
[Y(gh)^{-1}Y(g)Y(h)]\,X(n)=X(n)\,[Y(gh)^{-1}Y(g)Y(h)].
$$
So by Schur's Lemma there is a non-zero $\alpha(g,h)\in F$ such that
\begin{equation}\label{E:alpha}
Y(gh)=\alpha(g,h)Y(g)Y(h).
\end{equation}
Then $\alpha:G\times G\to F^\times$ is an $F$-valued cocycle and $Y$ is a projective representation of $G$ which extends $X$.

Next, for all $g\in G$, we define the bilinear form $B^g:W\times W\to F$ by
$$
B^g(u,v)=B(Y(g)u,Y(g)v),\quad\mbox{for all $u,v\in W$.}
$$
Then for all $n\in N$ we have
\begin{eqnarray*}
B^g(X(n)u,X(n)v)&=&B(Y(g)X(n)u,Y(g)X(n)v)\\
&=&B(X(gng^{-1})Y(g)u,X(gng^{-1})Y(g)v)\\
&=&B(Y(g)u,Y(g)v),\quad\mbox{as $B$ is $X$-invariant}\\
&=&B^g(u,v).
\end{eqnarray*}
This shows that $B^g$ is $X$-invariant. As $B$ is unique up to scalars
\begin{equation}\label{E:lambda}
B^g=\lambda(g)B,\quad\mbox{for some $\lambda(g)\in F^\times$.}
\end{equation}
As $B$ is $N$-invariant, we have $\lambda(n)=1$, for all $n\in N$. 

Now for all $g,h\in G$ we have $B^{gh}=\lambda(gh)B$. On the other hand
\begin{eqnarray*}
B^{gh}(u,v)
&=& B(Y(gh)u,Y(gh)v)\\
&=& B(\alpha(g,h)Y(g)Y(h)u,\alpha(g,h)Y(g)Y(h)v),\quad\mbox{by \eqref{E:alpha},}\\
&=& \alpha(g,h)^2\lambda(g)\lambda(h)B(u,v),\quad\mbox{by \eqref{E:lambda}.}
\end{eqnarray*}
Comparing these expressions, we see that 
\begin{equation}\label{E:alpha_lambda}
\lambda(gh)=\alpha(g,h)^2\lambda(g)\lambda(h).
\end{equation}

Since $F$ is perfect, for each $g$ in $G$, there exists $\mu(g)\in F$ such that $\mu(g)^2=\lambda(g)$. Set $\hat{Y}(g)=\mu(g)^{-1}Y(g)$ for all $g\in G$. Then $\hat{Y}$ is a projective representation of $G$ which extends $X$. Moreover $\hat Y$ corresponds to the cocycle $\beta(g,h):=\mu(g)\mu(h)\mu(gh)^{-1}\alpha(g,h)$.

Now $\beta(g,h)^2=\lambda(g)\lambda(h)\lambda(gh)^{-1}\alpha(g,h)^2=1$ and $\op{char}(F)=2$. So $\beta(g,h)=1$, for all $g,h\in G$. This means that $\hat{Y}$ is an $F$-representation of $G$ which extends $X$.

If we now consider the action of the elements $\hat{Y}(g)$ on the bilinear form $B$, a repetition of an earlier argument shows that for all $u$ and $v$ in $W$, and all $g\in G$,
$$
B(\hat{Y}(g)u,\hat{Y}(g)v)=\epsilon(g)B(u,v)
$$
for some nonzero scalar $\epsilon(g)$. The fact that $\hat{Y}$ is a representation of $G$, and $B$ is $N$-invariant now implies that $\epsilon$ is a homomorphism $G/N\rightarrow F^\times$. 

Finally, as $F$ has characteristic 2, $\epsilon$ has odd order in the character group of $G/N$. So as $F$ is perfect, $\epsilon=\delta^2$ for a unique homomorphism $\delta:G/N\rightarrow F^\times$. Then if we set ${\hat X}(g)=\delta(g)^{-1}\hat{Y}(g)$, we find that ${\hat X}$ is also an $F$-representation of $G$ which extends $X$. Moreover $\hat X$ is self-dual, as we can easily check that it leaves $B$ invariant.

Let $\hat W$ be the irreducible self-dual $FG$-module corresponding to $\hat X$. So $\hat W$ extends $W$. Then $S\otimes{\hat W}$ give all irreducible $FG$-modules lying over $W$, as $S$ ranges over all irreducible $FG/N$-modules. Recall that $S\otimes{\hat W}\cong S'\otimes{\hat W}$ if and only if $S\cong S'$. So $S\otimes{\hat W}$ is self-dual if and only if $S$ is self-dual. Fong's Lemma implies that $\op{dim}(S\otimes{\hat W})$ is an even multiple of $\op{dim}(\hat W)$, if $S$ is non-trivial and self-dual. So $\hat W$ is the unique extension of $W$ to $G$ which is self-dual. The statements about $V$ are now consequences of Lemma \ref{L:Clifford}.
\end{proof}

We will refer to $V$ as the canonical self-dual irreducible $FG$-module lying over $W$. Our Corollary, which is probably known, is an analogue of Richards' Theorem \cite{R} for irreducible $2$-Brauer characters:

\begin{Corollary}\label{C:odd_quotient}
Suppose that $|G:N|$ is odd. Then
\begin{enumerate}
\item[(i)] If\/ $W$ is a self-dual irreducible $FN$-module, then $W{\uparrow^G}$ has a unique self-dual composition factor, up to isomorphism.
\item[(ii)] If\/ $V$ is a self-dual irreducible $FG$-module then $V{\downarrow_N}$ is a sum of distinct self-dual irreducible $FN$-modules.
\end{enumerate}
In particular induction-restriction defines a natural correspondence between the self-dual irreducible $FG$-modules and the $G$-orbits of self-dual irreducible $FN$-modules. 
\begin{proof}
We may assume that $W$ is $G$-invariant. Let $\hat W$ be the unique self-dual irreducible $FG$-module which extends $W$. Then all irreducible $FG$-modules lying over $W$ have the form $U\otimes W$, for some irreducible $FG/N$-module $U$. But $G/N$ has odd order. So $U$ is self-dual if and only if it is trivial. It follows that $\hat W$ is the unique self-dual irreducible $FG$-module lying over $W$. All composition factors of $W{\uparrow^G}$ lie over $W$. So (i) holds.

For (ii), write $V{\downarrow_N}=e(W_1\oplus\dots\oplus W_t)$, where $e,t\geq1$ and $W_1,\dots,W_t$ are distinct irreducible $FN$-modules. Now $V{\downarrow_N}$ is a self-dual irreducible $FN$-module. So for each $i=1,\dots,t$, $W_i^*\cong W_j$, for some $j=1,\dots,t$. The map $i\mapsto j$ is an involutary permutation of $\{1,\dots,t\}$. But $t$ is odd, as by Clifford Theory it divides $|G:N|$. So there exists $i$ such that $W_i^*\cong W_i$, whence all $W_1,\dots,W_t$ are self-dual. Now by part (i), $V$ is the unique self-dual irreducible $FG$-module lying over $W_1$. It then follows from Theorem \ref{T:self_dual_extends} that $e=1$ i.e. $V{\downarrow_N}=W_1\oplus\dots\oplus W_t$.

The last statement follows from (i) and (ii).
\end{proof}
\end{Corollary}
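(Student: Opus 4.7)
My plan is to reduce Corollary~\ref{C:odd_quotient} to Theorem~\ref{T:self_dual_extends} via the classical observation that a group $H$ of odd order has only the trivial self-dual irreducible (Brauer) character, because no non-identity element of such a group is conjugate to its inverse. Applied in characteristic $2$, this gives: whenever $|H|$ is odd, the only self-dual irreducible $FH$-module is the trivial one. The remaining work is to combine this with Clifford theory and a small orbit-parity argument.

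For part (i), I would first reduce to the $G$-invariant case by passing to $T:=\op{Stab}_G(W)$. Since $[G:T]$ divides $|G:N|$ it is odd, and Clifford's theorem (Lemma~\ref{L:Clifford}(ii)) provides a bijection $M\mapsto M{\uparrow^G}$ between irreducibles of $T$ lying over $W$ and irreducibles of $G$ lying over $W$; this bijection commutes with linear duality, since duality commutes with induction from a finite-index subgroup. Hence self-duality is preserved and reflected, and I may assume $T=G$. Theorem~\ref{T:self_dual_extends} then furnishes a unique self-dual extension $\hat W$ of $W$ to $G$, and Lemma~\ref{L:Clifford}(iv) describes every irreducible lying over $W$ as $S\otimes\hat W$ for some irreducible $F(G/N)$-module $S$. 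Because $\hat W$ is self-dual, $S\otimes\hat W$ is self-dual iff $S$ is; the odd-order remark applied to $G/N$ forces $S$ to be trivial, so $\hat W$ itself is the unique self-dual composition factor of $W{\uparrow^G}$.

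For part (ii), write $V{\downarrow_N}=e(W_1\oplus\cdots\oplus W_t)$ with $\{W_1,\dots,W_t\}$ a single $G$-orbit (Lemma~\ref{L:Clifford}(i)). Self-duality of $V$ forces dualization to restrict to an involution on this orbit; since $t$ divides $|G:N|$ it is odd, so the involution has a fixed point, and by $G$-conjugacy every $W_i$ is then self-dual. Applying (i) to $W_1$ identifies $V$ as the unique self-dual irreducible $FG$-module lying over $W_1$, and Theorem~\ref{T:self_dual_extends} then forces $e=1$. The final ``natural correspondence'' statement is now immediate: restriction sends each self-dual irreducible $FG$-module to a single $G$-orbit of self-dual irreducible $FN$-modules, and induction from any orbit representative recovers it as its unique self-dual composition factor.

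The only ingredient that is not purely mechanical is the odd-order observation; everything else is a straightforward assembly of Theorem~\ref{T:self_dual_extends} and Clifford theory. The one mild point to watch is that linear duality commutes with induction from the finite-index but not-necessarily-normal subgroup $T$, which is what validates the initial reduction to the $G$-invariant case.
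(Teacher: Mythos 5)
Your proposal is correct and follows essentially the same route as the paper: reduce to the $G$-invariant case, invoke Theorem~\ref{T:self_dual_extends} for the unique self-dual extension $\hat W$, use Lemma~\ref{L:Clifford}(iv) together with the odd-order fact to see $\hat W$ is the only self-dual irreducible over $W$, and then for (ii) run the involution-on-odd-orbit argument followed by $e=1$ from the theorem. The only difference is that you spell out the justification for the initial reduction (duality commutes with induction from the stabilizer), which the paper leaves implicit.
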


We note that Lemma \ref{L:Clifford} can be used to show that the restriction of an irreducible $FG$-module to a subnormal subgroup of $G$ is semi-simple. Part (ii) of Theorem \ref{T:real_extends} extends to subnormal subgroups:

\begin{Corollary}\label{C:subnormal}
Let $H$ be a subnormal subgroup of\/ $G$ and let $U$ be a self-dual irreducible $FH$-module. Then there is a unique self-dual irreducible $FG$-module $V$ such that $U$ occurs with odd multiplicity in $V{\downarrow_H}$.
\begin{proof}
We use induction on $|G:H|$. By Theorem \ref{T:self_dual_extends} we may assume that $H$ is not a normal subgroup of $G$. So there exists $H\subsetneq N\subsetneq G$ such that $H$ is subnormal in $N$ and $N$ is  normal in $G$. As $|N:H|<|G:H|$, our inductive assumption implies that there is a unique self-dual  irreducible $FN$-module $W$ such that $U$ has odd multiplicity in $W{\downarrow_H}$. Let $V$ be the canonical $FG$-module over $W$.

Now $V{\downarrow_N}=W\oplus W_2\oplus\dots\oplus W_t$ is the sum of the distinct $G$-conjugates of $W$. As $W$ is self-dual, all the $W_i$ are self-dual. By choice of $W$, $U$ appears with even multiplicity in $W_i{\downarrow_H}$, for $i=2,\dots,t$. Since $V{\downarrow_H}=W{\downarrow_H}\oplus W_2{\downarrow_H}\oplus\dots\oplus W_t{\downarrow_H}$, we deduce that $U$ appears with odd multiplicity in $V{\downarrow_H}$.

Now let $V'$ be a self-dual irreducible $FG$-module such that $U$ occurs with odd multiplicity in $V'{\downarrow_H}$. Write $V'{\downarrow_N}=e(W_1'\oplus\dots\oplus W_s')$, for some odd integer $e$, where $W_1',\dots,W_s'$ are distinct irreducible $FN$-modules. We claim that one and hence all $W_i'$ are self-dual. For suppose otherwise. As $V'{\downarrow_N}$ is self-dual, for each $i$ there is a unique $j\ne i$ such that $W_i^{'*}\cong W_j'$. Then $U$, being self-dual, occurs with the same multiplicity in $W_i'{\downarrow_H}$ as in $W_j'{\downarrow_H}$. So $U$ occurs with even multiplicity in $e(W_i'\oplus W_j')$, and hence with even multiplicity in $V'{\downarrow_H}$. This contradiction proved our claim. So $e=1$ and $V'$ is the canonical $FG$-module over each $W_i'$. Now we may assume that $U$ appears with odd multiplicity in $W_1'{\downarrow_H}$. So $W\cong W_1'$, by uniqueness of $W$ over $U$, and then $V'\cong V$, by uniqueness of $V$ over $W$.
\end{proof}
\end{Corollary}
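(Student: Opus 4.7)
The natural plan is induction on $|G:H|$. The base case $H \triangleleft G$ is exactly part (ii) of Theorem \ref{T:real_extends}, or equivalently the statement of Theorem \ref{T:self_dual_extends}, so nothing needs to be done there. For the inductive step, since $H$ is subnormal but not normal in $G$, I would pick an intermediate subgroup $N$ with $H \subsetneq N \subsetneq G$ such that $H$ is subnormal in $N$ and $N$ is normal in $G$. Since $|N:H| < |G:H|$, the inductive hypothesis applied inside $N$ yields a unique self-dual irreducible $FN$-module $W$ in which $U$ occurs with odd multiplicity in $W{\downarrow_H}$. My candidate for $V$ is then the canonical self-dual irreducible $FG$-module lying over $W$ produced by Theorem \ref{T:self_dual_extends}.

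For the existence part I would use that $V{\downarrow_N} = W \oplus W_2 \oplus \cdots \oplus W_t$, the direct sum of the distinct $G$-conjugates of $W$, each with multiplicity one. Because $W$ is self-dual and $G$-conjugation commutes with dualization, each $W_i$ is itself a self-dual irreducible $FN$-module. The uniqueness part of the inductive hypothesis then forces $U$ to appear with even multiplicity in $W_i{\downarrow_H}$ for $i \geq 2$, and combining this with the odd multiplicity of $U$ in $W{\downarrow_H}$ gives odd multiplicity of $U$ in $V{\downarrow_H}$, as required.

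The hard part will be uniqueness. Suppose $V'$ is another self-dual irreducible $FG$-module in which $U$ occurs with odd multiplicity in $V'{\downarrow_H}$. By Clifford's theorem I would write $V'{\downarrow_N} = e(W_1' \oplus \cdots \oplus W_s')$ for distinct irreducible $FN$-modules $W_i'$ forming a single $G$-orbit. The crucial claim is that the $W_i'$ must all be self-dual. The trick is that if some $W_i'$ fails to be self-dual, then dualization induces a fixed-point-free involution on $\{W_1', \ldots, W_s'\}$, pairing each $W_i'$ with some distinct $W_j' \cong (W_i')^*$; since $U$ is self-dual, it occurs with equal multiplicity in $W_i'{\downarrow_H}$ and $W_j'{\downarrow_H}$, so the total multiplicity of $U$ in $V'{\downarrow_H}$ becomes even, a contradiction.

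Once I know all $W_i'$ are self-dual, the argument closes quickly. Some $W_i'$ must contain $U$ with odd multiplicity in its restriction to $H$, and the uniqueness from Theorem \ref{T:self_dual_extends} forces $V'$ to be the canonical self-dual module over $W_i'$, which in particular gives $e = 1$. Then the inductive uniqueness of $W$ over $U$ in $N$ forces $W_i' \cong W$, and the uniqueness of the canonical module over $W$ from Theorem \ref{T:self_dual_extends} forces $V' \cong V$, completing the induction. The main subtlety throughout is keeping track of two independent uniqueness assertions — one inherited from the inductive hypothesis at level $N$, and one from Theorem \ref{T:self_dual_extends} at level $G$ — and applying each at the correct stage.
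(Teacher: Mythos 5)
Your proposal is correct and follows essentially the same path as the paper: induction on $|G:H|$ with the normal case from Theorem~\ref{T:self_dual_extends} as the base, choice of an intermediate normal subgroup $N$, use of the canonical module over the inductively-given $W$ for existence, and the dualization-as-fixed-point-free-involution trick to force all Clifford constituents of $V'{\downarrow_N}$ to be self-dual before pinning down $V'$ via the two uniqueness statements. The only cosmetic difference is that you extract the oddness of the Clifford multiplicity $e$ slightly later in the argument than the paper does, but the underlying divisibility reasoning is identical.
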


We thank J. M\"uller for allowing us to include the following example. In the context of the Corollary it shows that the multiplicity of $U$ in $V{\downarrow_H}$ can be arbitrarily large.

{\bf Example:} Let $p$ be an odd prime, let $F=\op{GF}(p)$ and let $E=\op{GF}(p^2)$. Considering the additive group of $E$, let $Z$ be the group of scalars in $\op{GL}(E)\cong\op{GL}_2(F)$ and let $S$ be a Sylow $2$-subgroup of $\op{GL}(E)$. We choose an involution $t\in S$, depending on the residue class of $p$ mod $4$, as follows.

If $p\equiv1$ (mod $4$), let $C$ be a Sylow $2$-subgroup of the multiplicative group $F^\times$. Then we may assume that
$S=\left\{\begin{bmatrix}a&0\\0&b\end{bmatrix},\begin{bmatrix}0&a\\b&0\end{bmatrix}\mid a,b\in C\right\}$ and we take $t=\begin{bmatrix}0&1\\1&0\end{bmatrix}$.

If $p\equiv3$ (mod $4$), we take $t$ to be the Frobenius map $\lambda\mapsto\lambda^p$, for all $\lambda\in E$. So $S=C\rtimes\langle t\rangle$, where $C$ is a Sylow $2$-subgroup of the multiplicative group $E^\times$.

Now let $e_1,e_2\in E$ be non-trivial, such that $t(e_1)=e_1$ and $t(e_2)=-e_2$. So $E=Fe_1\oplus Fe_2$, as $F$-vector spaces. For our example we take $G=E\rtimes ZS$ and its subgroup $H=F e_2\rtimes\langle t\rangle$. Then $H$ is a dihedral group which is subnormal in $G$, for example because it is normal in $E\rtimes(Z\times\langle t\rangle)$, and $G/(E\rtimes Z)$ is nilpotent (as it is a $2$-group).

As $E$ is abelian, each of its irreducible $K$-characters is linear and as $|E|$ is odd, each $K$-character may be regarded a $2$-Brauer character. Let $\theta\in\op{Irr}(E)$ have kernel $Fe_2$. Then $\theta$ has stabilizer $E\rtimes\langle t\rangle$ in $G$, from our descriptions of $Z$ and $S$. Let $\hat\theta$ be the unique extension of $\theta$ to a Brauer character of $E\rtimes\langle t\rangle$. Then $\psi:=\hat\theta{\uparrow^G}$ is an irreducible Brauer character of $G$ and $\psi{\downarrow_E}$ is the sum of $\psi(1)$ distinct $G$-conjugates of $\theta$.

Let $g\in ZS$ and let $\mu$ be a nontrivial linear character of $Fe_2$. So $\phi:=\mu{\uparrow^H}$ is an irreducible $2$-Brauer character of $H$. Now $ZS$ acts on the $1$-dimensional $F$-subspaces of $E$, and $Z\langle t\rangle$ is the stabilizer of the subspace $Fe_1$ in $ZS$. We set $s=|ZS:Z\times\langle t\rangle|$.

Now $\theta^g$ is trivial on $Fe_2$ if and only if $g\in Z\langle t\rangle$. Consequently $\psi{\downarrow_H}$ contains the trivial Brauer character of $H$ with multiplicity $|Z|=p-1$. Suppose that $g\not\in Z\langle t\rangle$. Then $\theta^g{\downarrow_{Fe_2}}$ is a non-trivial character of $Fe_2$. So there is a unique $z\in Z$ such that $\theta^{gz}{\downarrow_{Fe_2}}=\mu$. It follows from this that $\psi{\downarrow_H}$ contains $\phi$ with multiplicity $s-1$. Finally, $p^2-1$ has $2$-part $2s\geq8$. So $\psi$ is the canonical character of $G$ lying over $\phi$, with odd multiplicity $s-1\geq3$.



\section{Irreducible self-dual modules of non-quadratic type}

\noindent Let $V$ be a non-trivial self-dual irreducible $FG$-module. Then by Lemma \ref{L:Fong}, $V$ affords a non-degenerate $G$-invariant alternating form $B$. Let $Q:V\rightarrow F$ be a quadratic form which polarizes to $B$. This means that $Q(\lambda v_1)=\lambda^2 Q(v_1)$ and $Q(v_1+v_2)=Q(v_1)+B(v_1,v_2)+Q(v_2)$, for all $\lambda\in F$ and $v_1,v_2\in V$. However, contrary to what happens when $\op{char}(F)\ne2$, $Q$ is not uniquely determined by $B$. In particular $Q$ need not be $G$-invariant. 

On the other hand, for many $FG$-modules each $G$-invariant quadratic form is uniquely determined by its polarization:

\begin{Lemma}\label{L:unique_quadratic}
Let $G$ be a finite group and let $F$ be an arbitrary field of characteristic $2$. Suppose that $V$ is an $FG$-module which affords a non-degenerate $G$-invariant alternating bilinear form $B$ but $V$ has no trivial quotient. Then $V$ affords at most one $G$-invariant quadratic form which polarizes to $B$.
\begin{proof}
Let $Q_1$ and $Q_2$ be $G$-invariant quadratic forms on $V$ which polarize to $B$. Then $P:=Q_1+Q_2$ is a $G$-invariant quadratic form which polarizes to $2B=0$. Thus $P(\lambda v_1)=\lambda^2 P(v_1)$, and $P(v_1+v_2)=P(v_1)+P(v_2)$, for all $\lambda\in F$ and $v_1,v_2\in V$.

Set $U:=\{m\in V\mid P(v)=0\}$. Then $U$ is a submodule of $V$. Let $g\in G$ and $v\in V$. Then $gv+v\in U$, as  $P(gv+v)=P(gv)+P(v)=0$. So $G$ acts trivially on $V/U$, whence $U=V$ by our hypothesis on $V$. We conclude that $P=0$, or equivalently $Q_1=Q_2$.
\end{proof}
\end{Lemma}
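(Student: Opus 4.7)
The plan is to measure the failure of uniqueness by forming the difference of two candidate forms and showing this difference must be identically zero. Suppose $Q_1$ and $Q_2$ are $G$-invariant quadratic forms on $V$ both polarizing to $B$. Set $P := Q_1 + Q_2$; since we are in characteristic $2$, this plays the role of the difference. Then $P$ polarizes to $B + B = 0$, so for all $v_1, v_2 \in V$ we have $P(v_1 + v_2) = P(v_1) + P(v_2)$, and $P(\lambda v) = \lambda^2 P(v)$ for all $\lambda \in F$. In particular $P$ is additive (and Frobenius-semilinear), and it remains $G$-invariant.

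The key step is to package the zero set $U := \{v \in V : P(v) = 0\}$ as an $FG$-submodule of $V$. Additivity of $P$ shows that $U$ is closed under addition, and the identity $P(\lambda v) = \lambda^2 P(v)$ shows it is closed under scalar multiplication, so $U$ is an $F$-subspace. The $G$-invariance of $P$ shows $U$ is $G$-stable. With $U$ a submodule in hand, I would then compute, for any $g \in G$ and $v \in V$,
\[
P(gv + v) = P(gv) + P(v) = P(v) + P(v) = 0,
\]
using additivity for the first equality and $G$-invariance for the second. Thus $gv + v \in U$ for every $g \in G$ and $v \in V$, which means $G$ acts trivially on the quotient $V/U$.

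At this point the hypothesis that $V$ has no trivial quotient forces $V/U = 0$, i.e.\ $U = V$, so $P \equiv 0$ and $Q_1 = Q_2$. The only genuinely delicate point is the structural observation that the ambiguity in lifting a polarization to a quadratic form in characteristic $2$ lives in the space of additive, Frobenius-semilinear, $G$-invariant maps $V \to F$, and that the zero locus of such a map is automatically an $FG$-submodule with trivially-acting quotient. Once that is recognised, the no-trivial-quotient hypothesis does exactly the work it is designed to do, and the rest is bookkeeping.
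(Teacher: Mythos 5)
Your proof is correct and follows the same approach as the paper: form $P=Q_1+Q_2$, observe that it polarizes to zero and hence is additive and Frobenius-semilinear, show that its zero set $U$ is an $FG$-submodule, then use $P(gv+v)=0$ to see that $G$ acts trivially on $V/U$, so the no-trivial-quotient hypothesis forces $U=V$ and $P=0$. The only differences from the paper's argument are expository (you spell out why $U$ is a submodule and name the Frobenius-semilinearity explicitly); the substance is identical.
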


Recall the notion of a canonical irreducible $FG$-module introduced after Theorem \ref{T:self_dual_extends}.

\begin{Proposition}\label{P:canonical_quadratic_type}
Let $W$ be a non-trivial self-dual  irreducible $FN$-module and let $V$ be the canonical self-dual irreducible $FG$-module lying over $W$. Then $V$ has quadratic type if and only if\/ $W$ has quadratic type.
\end{Proposition}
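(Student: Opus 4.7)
The plan is to treat the two implications separately. The reverse direction uses the decomposition of $V{\downarrow_N}$ supplied by Theorem~\ref{T:self_dual_extends}, while the forward direction first handles the stabilizer case via the uniqueness in Lemma~\ref{L:unique_quadratic} and then passes to the general case by inducing a quadratic form from the stabilizer.

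For the reverse direction, I would suppose $V$ carries a non-zero $G$-invariant quadratic form $Q_V$ polarizing to $B_V$. Theorem~\ref{T:self_dual_extends} gives $V{\downarrow_N}\cong W_1\oplus\cdots\oplus W_n$, where the $W_i$ are pairwise non-isomorphic $G$-conjugates of $W$, all self-dual. No two distinct $W_i$ are isomorphic to each other's dual, so Schur's lemma forces $B_V{\downarrow_N}$ to split as the orthogonal sum of its restrictions to the subspaces $W_i\times W_i$, and non-degeneracy of $B_V$ forces each of those restrictions to be non-degenerate. Restricting $Q_V$ to $W_i$ yields an $N$-invariant quadratic form polarizing to this non-zero form, so the restriction itself is non-zero. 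Hence every $W_i$, and in particular $W$, has quadratic type.

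For the forward direction, I would first assume that $W$ is $G$-invariant, so that the stabilizer equals $G$ and $V=\hat W$ is the self-dual extension of $W$ constructed in Theorem~\ref{T:self_dual_extends}; after rescaling, $B_{\hat W}$ restricts to $B_W$ on $W$. Applying Lemma~\ref{L:unique_quadratic} to the non-trivial irreducible $FN$-module $W$ produces a unique $N$-invariant quadratic form $Q_W$ polarizing to $B_W$. For any $g\in G$, the translated form $g\cdot Q_W$ is again $N$-invariant (since $N$ is normal in $G$) and polarizes to $g\cdot B_W=B_W$, so uniqueness forces $g\cdot Q_W=Q_W$. Thus $Q_W$ is $G$-invariant as a quadratic form on $V=\hat W$, witnessing that $V$ has quadratic type.

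For the general forward case, let $T$ be the stabilizer of $W$ in $G$. Applying the preceding paragraph with $G$ replaced by $T$ yields a $T$-invariant quadratic form $Q_{\hat W}$ on $\hat W$ polarizing to $B_{\hat W}$. I would then induce it: fixing coset representatives $g_1,\dots,g_n$ of $T$ in $G$ and writing each element of $V=\hat W{\uparrow^G}$ uniquely as $\sum_i g_i\otimes w_i$ with $w_i\in\hat W$, set $Q_V(\sum_i g_i\otimes w_i):=\sum_i Q_{\hat W}(w_i)$. The main obstacle is verifying $G$-invariance of $Q_V$: an element $h\in G$ sends $g_i\otimes w_i$ to $g_{\sigma(i)}\otimes t_iw_i$ for some permutation $\sigma$ of $\{1,\dots,n\}$ and some $t_i\in T$, and $T$-invariance of $Q_{\hat W}$ then translates directly into $G$-invariance of $Q_V$. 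The polarization of $Q_V$ is the orthogonal sum of copies of $B_{\hat W}$ across the summands $g_i\otimes\hat W$, which is a non-degenerate $G$-invariant alternating form on the irreducible module $V$, hence a scalar multiple of $B_V$ by Schur's lemma. The non-zero $G$-invariant form $Q_V$ therefore witnesses that $V$ has quadratic type, completing the proof.
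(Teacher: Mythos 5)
Your proof is correct and follows essentially the same approach as the paper: both directions use the multiplicity-one decomposition of $V{\downarrow_N}$ from Theorem~\ref{T:self_dual_extends}, the uniqueness statement of Lemma~\ref{L:unique_quadratic} to promote $N$-invariance of the quadratic form on $W$ to $T$-invariance on $\hat W$, and induction of the quadratic and bilinear forms from $T$ to $G$. The paper phrases the first direction via $W\cap W^\perp=0$ and the second via $T$-invariance of the polarization $b$ rather than rescaling to $B_W$, but these are cosmetic differences; the substance is identical.
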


\begin{proof}
We adopt the notation of Theorem \ref{T:self_dual_extends}. So $W$ has a unique self-dual extension $\hat W$ to its stabilizer $T$ and $V={\hat W}{\uparrow^G}$. Also $V{\downarrow_N}$ is the sum of the distinct $G$ conjugates of $W$, each occurring with multiplicity $1$. So we can identify $W$ with an $F$-subspace of $V$.

Suppose first that $V$ affords a $G$-invariant quadratic form $Q$, and let $B$ be its polarization. So $B$ is a non-degenerate $G$-invariant alternating form on $V$. Let $W^\perp=\{v\in V\mid B(v,w)=0,\mbox{ for all $w\in W$}\}$. Then $W^\perp$ is a submodule of $V{\downarrow_N}$ and $V{\downarrow_N}/W^\perp\cong W^*\cong W$ as $FN$-modules. As $W$ occurs with multiplicity $1$ in $V{\downarrow_N}$, we deduce that $W\cap W^\perp=0$. So the restriction $B{\downarrow_W}$ to $W$ is non-degenerate. Moreover the restriction $Q{\downarrow_W}$ of $Q$ to $W$ is an $N$-invariant quadratic form on $W$ which polarizes to $B{\downarrow_W}$. So $W$ has quadratic type.

Conversely, suppose that $W$ affords a non-degenerate $N$-invariant quadratic form $q$, and let $b$ be its polarization. Then $b$ is a non-degenerate $N$-invariant alternating form on $W$. We identify $W$ and $\hat{W}$ as $F$-vector spaces. As $\hat W$ is self-dual and irreducible, it affords a $T$-invariant non-zero bilinear form, say $b'$. Now all $N$-invariant non-zero bilinear forms on $W$ are scalar multiples of each other, as $W$ is irreducible. So $b$ is a scalar multiple of $b'$, and in particular $b$ is $T$-invariant.
 
For $t\in T$, we define a quadratic form $q^t$ on $W$ by setting
$$
q^t(w):=q(tw),\quad\mbox{for all $w\in W$.}
$$
It is clear that $q^t$ is $N$-invariant, and also that $q^t$ polarizes to $b$, as $b$ is $T$-invariant. So $q^t=q$, according to Lemma \ref{L:unique_quadratic}. This establishes that $q$ is $T$-invariant, and shows that $\hat W$ is of quadratic type.

Next, we may decompose $V={\hat W}{\uparrow^G}$ as $F$-vector space
$$
V=(g_1\otimes\hat W)\oplus(g_1\otimes\hat W)\oplus\dots\oplus(g_n\otimes\hat W),
$$
where $g_1,\dots,g_n$ is a transversal to $T$ in $G$. By a standard procedure, we may define the induced forms $b{\uparrow^G}$ and $q{\uparrow^G}$ on $V$ using
$$
\begin{aligned}
b{\uparrow^G}\left(\sum_{i=1}^n g_i\otimes w_i,\sum_{j=1}^n g_j\otimes x_j\right)
&=\sum_{i=1}^n b(w_i,x_i),\\
q{\uparrow^G}\left(\sum_{i=1}^n g_i\otimes w_i\right)
&=\sum_{i=1}^n q(w_i),
\end{aligned}
$$
for all $w_i,x_j\in\hat W$. Then $b{\uparrow^G}$ is a $G$-invariant alternating bilinear form on $V$ and $q{\uparrow^G}$ is a $G$-invariant quadratic form on $V$ which polarizes to $b{\uparrow^G}$. So $V$ is of quadratic type.
\end{proof}

\begin{Proposition} \label{P:even_quadratic_type}
Let $V$ be a self-dual irreducible $FG$-module and suppose that some self-dual irreducible $FN$-module $W$ occurs with multiplicity $e>1$ in $V{\downarrow_N}$. Then $e$ is even and $V$ has quadratic type.
\end{Proposition}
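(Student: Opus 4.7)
My strategy has three stages: deduce parity of $e$ from Theorem \ref{T:real_extends}, express $V$ via Clifford theory as an induction from a tensor product, and construct a $T$-invariant quadratic form on that tensor product using the classical orthogonal structure on a tensor of symplectic spaces in characteristic $2$.

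For the parity, Theorem \ref{T:real_extends}(ii)--(iii) guarantees a unique self-dual irreducible $FG$-module $V_0$ in which $W$ occurs with odd multiplicity, and there the multiplicity equals $1$. Since $W$ occurs in $V{\downarrow_N}$ with multiplicity $e>1$, we must have $V\neq V_0$, which forces $e$ to be even. For the quadratic-type claim I will work under the assumption that $W$ is non-trivial (the substantive case). Let $T$ be the stabilizer of $W$ in $G$ and let $\hat W$ be its unique self-dual extension to $T$ from Theorem \ref{T:self_dual_extends}. Lemma \ref{L:Clifford}(iv) lets me write $V\cong (S\otimes \hat W){\uparrow^G}$ for a unique irreducible $F(T/N)$-module $S$ of dimension $e$; bijectivity of the Clifford correspondence combined with $V\cong V^{*}$ and $\hat W\cong \hat W^{*}$ forces $S\cong S^{*}$. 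Fong's Lemma then supplies a non-degenerate $(T/N)$-invariant alternating form $\beta$ on $S$ and a non-degenerate $T$-invariant alternating form $\gamma$ on $\hat W$, and $\beta\otimes\gamma$ is the $T$-invariant non-degenerate alternating form on $U:=S\otimes \hat W$.

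The heart of the argument will be to construct a $T$-invariant quadratic polarization $Q_U$ of $\beta\otimes\gamma$. For this I plan to use the classical fact that in characteristic $2$ the homomorphism $\op{Sp}(\beta)\times\op{Sp}(\gamma)\to\op{GL}(U)$ factors through an orthogonal group: after fixing symplectic bases $\{e_i,f_i\}$ of $S$ and $\{x_j,y_j\}$ of $\hat W$, the quadratic form
\[
Q_U\Bigl(\sum_{i,j}\bigl(a_{ij}\,e_i\otimes x_j+b_{ij}\,e_i\otimes y_j+c_{ij}\,f_i\otimes x_j+d_{ij}\,f_i\otimes y_j\bigr)\Bigr):=\sum_{i,j}\bigl(a_{ij}d_{ij}+b_{ij}c_{ij}\bigr)
\]
polarizes to $\beta\otimes\gamma$ and is invariant under $\op{Sp}(\beta)\times\op{Sp}(\gamma)$. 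The latter is checked on a generating set (symplectic transvections, a split torus, basis-swapping involutions); the key cancellations come from the defining symplectic identities $A^{T}D+C^{T}B=I$, the symmetry of $A^{T}C$ and $B^{T}D$, and $\op{char}(F)=2$. This generator-by-generator verification is what I expect to be the main obstacle; everything else is routine Clifford theory.

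Since the $T$-action on $U$ factors through $\op{Sp}(\beta)\times\op{Sp}(\gamma)$, the form $Q_U$ is $T$-invariant, so $U$ is of quadratic type as an $FT$-module. Finally, inducing $Q_U$ from $T$ to $G$ exactly as in the last paragraph of the proof of Proposition \ref{P:canonical_quadratic_type} yields a $G$-invariant quadratic form on $V=U{\uparrow^G}$ polarizing to the (unique up to scalar) $G$-invariant alternating form on $V$, so $V$ has quadratic type.
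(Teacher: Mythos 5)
Your proposal is correct and follows essentially the paper's route: write $V\cong(S\otimes\hat W){\uparrow^G}$ with $S$ and $\hat W$ both non-trivial self-dual, equip $S\otimes\hat W$ with the canonical quadratic polarization of $\beta\otimes\gamma$ (your explicit symplectic-coordinate formula is exactly the Sin--Willems form \cite[Proposition 3.4]{SW} that the paper simply cites in its Remark), and induce the resulting $T$-invariant quadratic form up to $G$. The only minor variations are that you deduce $e$ even via Theorem~\ref{T:real_extends}(ii)--(iii) rather than directly from $e=\dim S$ together with Fong's Lemma, that you sketch a from-scratch verification of the tensor quadratic form instead of citing \cite{SW}, and that you explicitly flag the hypothesis $W$ non-trivial, which the paper's proof also needs (its Remark requires both factors to carry alternating forms) but leaves implicit.
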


\begin{proof}
We adopt the notation of Theorem \ref{T:self_dual_extends}. So $W$ has a unique self-dual extension $\hat W$ to its stabilizer $T$ and $V=(S\otimes{\hat W}){\uparrow^G}$, where $S$ is a self-dual irreducible $F(T/N)$-module. Now $W$ occurs with multiplicity $1$ in $({\hat W}{\uparrow^G}){\downarrow_N}$. So the multiplicity $e$ of $W$ in $V{\downarrow_N}$ equals $\op{dim}(S)$. As $e>1$, we deduce that $S$ is non-trivial. But then $\op{dim}(S)$ is even, according to Lemma \ref{L:Fong}. Finally, since $S$ and $\hat{W}$ are both non-trivial and self-dual, $S\otimes\hat{W}$ has quadratic type, by the remark below. This in turn implies that $V=(S\otimes{\hat W}){\uparrow^G}$ has quadratic type.
\end{proof}

{\bf Remark:} Suppose that $U$ and $V$ are $FG$-modules which afford non-degenerate $G$-invariant alternating bilinear forms $B_U$ and $B_V$, respectively. According to Sin and Willems \cite[Proposition 3.4]{SW} there is a quadratic form $Q$ on $U\otimes_FV$, which polarizes to $B_U\otimes B_V$, such that $Q(u\otimes v)=0$, for all $u\in U$ and $v\in V$. These properties uniquely specify $Q$. For, if $u_1,\dots,u_n$ and $v_1,\dots,v_m$ are bases for $U$ and $V$, respectively, then for all $\lambda_{ij}\in F$
$$
Q\left(\sum\lambda_{ij}u_i\otimes v_j\right):=\sum\lambda_{ij}\lambda_{i'j'}B_U(u_i,u_{i'})B_V(v_j,v_{j'}),
$$
where $i,i'$ range over $1,\dots,n$ and $j,j'$ over $1,\dots,m$. Any basic tensor $u\otimes v$ can be written as $\sum\alpha_i\beta_ju_i\otimes v_j$, for scalars $\alpha_i,\beta_j$. Then in the expression for $Q(u\otimes v)$, the term indexed by $(i,j),(i',j')$ can be cancelled with the term indexed by $(i',j),(i,j')$, for $i\ne i'$. Likewise pairs of terms with $j\ne j'$ cancel. Finally, all terms with $i=i'$ or $j\ne j'$ are zero as $B_U$ and $B_V$ are alternating. It is now clear that $Q$ is $G$-invariant.

We turn our attention to those  irreducible $FN$-modules that are not self-dual but are $G$-conjugate to their duals. To investigate these, we require a familiar concept.

Suppose that $W$ is an irreducible $FN$-module, with stabilizer $T$ in $G$. Then
$$
T^*=\{ g\in G\mid W^g\cong W\mbox{ or }W^g\cong W^*\}
$$
is a subgroup of $G$ containing $T$, called the \emph{extended stabilizer} of $W$. If $W$ and $W^*$ are non-isomorphic and $G$-conjugate, then $|T^*:T|=2$. Otherwise $T=T^*$.

\begin{Proposition}\label{P:hyperbolic_quadratic}
Let $W$ be an  irreducible $FN$-module which is not self-dual. Then all self-dual irreducible $FG$-modules lying over $W$ are of quadratic type.
\end{Proposition}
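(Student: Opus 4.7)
The plan is to construct, for any self-dual irreducible $FG$-module $V$ lying over $W$, an explicit non-zero $G$-invariant quadratic form on $V$, by pairing the Clifford constituents of $V{\downarrow_N}$ with their duals. Since $W$ is not self-dual it is in particular non-trivial, so $V$ is non-trivial and Lemma~\ref{L:Fong} supplies a non-degenerate $G$-invariant alternating form $B$ on $V$. Write $V{\downarrow_N}=V_{W_1}\oplus\cdots\oplus V_{W_n}$ for the isotypic decomposition into the distinct $G$-conjugates $W_1,\dots,W_n$ of $W$. Self-duality of an $FN$-module is preserved under $G$-conjugation, so none of the $W_i$ is self-dual.

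Next I would analyse how $B$ respects this decomposition. The $FN$-isomorphism $V\to V^*$ determined by $B$ identifies $V_{W_i}$ with the $W_i$-isotypic component of $V^*=\bigoplus_j V_{W_j}^*$, which is $V_{W_{i^*}}^*$, where $W_{i^*}:=W_i^*$. Hence $B$ restricts to a perfect pairing $V_{W_i}\times V_{W_{i^*}}\to F$ and vanishes on $V_{W_i}\times V_{W_j}$ whenever $j\ne i^*$. Because no $W_i$ is self-dual, $i\ne i^*$ for every $i$, so the indices partition into disjoint duality pairs $\{i,i^*\}$ and $B$ itself vanishes on every $V_{W_i}\times V_{W_i}$.

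Using this, I would define
$$
Q(v)=\sum_{\{i,i^*\}}B(v_i,v_{i^*}),\qquad v=\sum_iv_i,\ v_i\in V_{W_i},
$$
the sum running over unordered dual pairs; the choice of ordering within each pair is irrelevant since alternating forms are symmetric in characteristic $2$. Then $Q(\lambda v)=\lambda^2 Q(v)$ and a direct expansion of $Q(v+w)$ verifies that $Q$ polarizes to $B$, because $B(v,w)=\sum_i B(v_i,w_{i^*})$ regroups naturally as a sum over dual pairs. Non-degeneracy of $B$ on each summand $V_{W_i}\oplus V_{W_{i^*}}$ ensures $Q\ne 0$.

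The remaining and most delicate step, which I expect to be the main obstacle, is $G$-invariance of $Q$. Here I would use that $G$ permutes the $W_i$ via a permutation $\sigma$ and that duality commutes with $G$-conjugation, so $\sigma(i^*)=\sigma(i)^*$; hence $G$ permutes the set of dual pairs. Combined with $(gv)_{\sigma(i)}=gv_i$ and the $G$-invariance of $B$, the sum defining $Q(gv)$ is merely a reindexing of that for $Q(v)$, yielding $Q(gv)=Q(v)$. Thus $V$ affords a non-zero $G$-invariant quadratic form and is of quadratic type.
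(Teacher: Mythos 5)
Your proof is correct, and it takes a genuinely different (and arguably cleaner) route than the paper. The paper passes through the extended stabilizer $T^* \supseteq T$ with $|T^*:T|=2$, applies the Clifford correspondence to write $V = X{\uparrow^G}$ with $X = U{\uparrow^{T^*}}$ and $X{\downarrow_T} = U \oplus \tau U \cong U \oplus U^*$, constructs a hyperbolic quadratic form on $X$ from the two totally isotropic constituents, verifies $T^*$-invariance, and then induces the form up to $G$. You instead work entirely inside $V$ itself: you observe that the non-degenerate $G$-invariant alternating form $B$ from Fong's Lemma induces, via the $FN$-isomorphism $V \to V^*$, a perfect pairing between dual isotypic components $V_{W_i}$ and $V_{W_{i^*}}$ while vanishing on all other products (in particular each $V_{W_i}$ is totally isotropic since no $W_i$ is self-dual), and you then assemble the hyperbolic form $Q(v) = \sum_{\{i,i^*\}} B(v_i, v_{i^*})$ directly on $V$. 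The verification that $Q$ polarizes to $B$ and is $G$-invariant (the latter because $G$ permutes the isotypic components compatibly with the duality involution $i \mapsto i^*$) is straightforward and complete. Both arguments ultimately rest on the same geometric fact — non-self-duality of $W$ forces a Lagrangian splitting — but yours bypasses the extended stabilizer, the Clifford correspondence, and the induction of quadratic forms, trading that machinery for a careful bookkeeping of isotypic components. The trade-off is roughly: the paper's version isolates the hyperbolic structure on a small module $X$ and lets induction do the rest, while yours makes the hyperbolic structure on $V$ explicit; either is a legitimate and rigorous proof.
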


\begin{proof}
If $W$ is not $G$-conjugate to $W^*$, there are no self-dual irreducible $FG$-modules lying over $W$. So we may assume that $W$ is $G$-conjugate to $W^*$ and that $|T^*:T|=2$.

Let $V$ be a self-dual irreducible $FG$-module lying over $W$. Then $V=U{\uparrow^G}$, where $U$ is the unique irreducible submodule of $V{\downarrow_T}$ lying over $W$. Likewise $V\cong V^*=(U^*){\uparrow^G}$. So $U^*$ is the unique irreducible submodule of $V{\downarrow_T}$ lying over $W^*$. Note that $U^*\not\cong U$, as $W$ and $W^*$ are not $T$-conjugate.

Set $X:=U{\uparrow^{T^*}}$. Then $V=X{\uparrow^G}$, and $X$ is an irreducible submodule of $V{\downarrow_{T^*}}$. Now $U$ is a submodule of $X{\downarrow_T}$. So by uniqueness of $U$, $X$ is the unique irreducible submodule of $V{\downarrow_{T^*}}$ lying over $W$. Likewise $X^*$ is the unique irreducible submodule of $V{\downarrow_{T^*}}$ lying over $W^*$. But $X$ lies over $W^*$, as $W$ and $W^*$ are $T^*$-conjugate. So $X\cong X^*$.

Let $\tau\in T^*\backslash T$. Then $X{\downarrow_T}=U\oplus\tau U$. But $U$ and $U^*$ are non-isomorphic irreducible submodules of $X{\downarrow_T}$. So $X{\downarrow_T}=U\oplus U^*$, whence $\tau U\cong U^*$.

Let $B$ be a $G$-invariant non-degenerate alternating bilinear form on $V$, and let $X^\perp$ be the orthogonal complement of $X$ in $V{\downarrow_{T^*}}$ with respect to $B$. Then $X\cong X^*\cong V/X^\perp$. So $X\cap X^\perp=0$, by uniqueness of $X$. This shows that the restriction $B_X$ of $B$ to $X$ is a ($T^*$-invariant) non-degenerate alternating bilinear form on $X$. Since $U$ is irreducible but not self-dual, it is totally isotropic with respect to $B_X$, and likewise, so is $\tau U$. We define $Q:X\rightarrow F$ via
$$
Q(u_1+\tau u_2)=B(u_1,\tau u_2),\quad\mbox{for all $u_1,u_2\in U$.}
$$
Then $Q$ is a quadratic form which polarizes to $B_X$. As $Q$ vanishes on the subspaces $U$ and $\tau U$, it is an example of a hyperbolic form.

We now check that $Q$ is $T^*$-invariant. It is certainly $T$-invariant, as $T$ fixes $U$ and $\tau U$, and preserves $B$. Suppose that $\tau'\in T^*\backslash T$. Then $\tau' u_1\in \tau U$ and $\tau'\tau u_2\in U$. So
$$
Q(\tau'(u_1+\tau u_2))=B(\tau'\tau u_2,\tau' u_1)=B(\tau u_2,u_1)=B(u_1,\tau u_2)=Q(u_1+\tau u_2),
$$
since $\tau'$ also leaves $B$ invariant. So $Q$ is indeed $T^*$-invariant.

Finally, the induced form $Q{\uparrow^G}$ is a $G$-invariant quadratic form on $V$ which polarizes to the $G$-invariant non-degenerate alternating bilinear form $B_X{\uparrow^G}$. So $V$ is of quadratic type, as required.
\end{proof}

\begin{proof}[Proof of Theorem \ref{T:non_quadratic}]
Let $V$ be a self-dual irreducible $FG$-module which is not of quadratic type, such that $N$ does not act trivially on $V$. Write $V{\downarrow_N} = e(W_1\oplus\dots\oplus W_t)$, where $e>0$ and $W_1,\dots,W_t$ is a $G$-orbit of irreducible $FN$-modules, each of which is irreducible and non-trivial.

It follows from Proposition \ref{P:hyperbolic_quadratic} that each $W_i$ is self-dual, and then from Proposition \ref{P:even_quadratic_type} that $e=1$. So $V$ is the canonical self-dual irreducible $FG$-module over each $W_i$. Then Proposition \ref{P:canonical_quadratic_type} implies that each $W_i$ has non-quadratic type.
\end{proof}

We now show how the techniques we have developed above can be employed to obtain a criterion for all self-dual irreducible $FG$-modules to be of quadratic type.

\begin{Corollary}\label{C:necessary_and_sufficient_condition}
Let $N$ be a normal subgroup of\/ $G$. Then all non-trivial self-dual irreducible $FG$-modules are of quadratic type if and only if the same is true of both $FN$ and $FG/N$.
\end{Corollary}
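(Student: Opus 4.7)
The plan is to prove the two directions separately, using Propositions \ref{P:canonical_quadratic_type}, \ref{P:even_quadratic_type}, and \ref{P:hyperbolic_quadratic} as the main tools and applying the Clifford-theoretic dichotomy they collectively provide.

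For the forward direction, assume every non-trivial self-dual irreducible $FG$-module is of quadratic type. To deduce the property for $FG/N$, I would take an arbitrary non-trivial self-dual irreducible $F(G/N)$-module and inflate it through the quotient map $G\to G/N$: the resulting $FG$-module is still non-trivial, self-dual and irreducible, and any $G$-invariant quadratic form on it automatically descends to a $G/N$-invariant form, so the hypothesis gives quadratic type for $FG/N$. To deduce the property for $FN$, I would take a non-trivial self-dual irreducible $FN$-module $W$ and form the canonical self-dual irreducible $FG$-module $V$ lying over $W$ (Theorem~\ref{T:self_dual_extends}); since $V$ is non-trivial (it lies over the non-trivial $W$), the hypothesis says $V$ has quadratic type, and then Proposition~\ref{P:canonical_quadratic_type} transfers this to $W$.

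For the reverse direction, assume both $FN$ and $FG/N$ have the property. Let $V$ be a non-trivial self-dual irreducible $FG$-module. If $N$ acts trivially on $V$, then $V$ is a non-trivial self-dual irreducible $F(G/N)$-module, hence of quadratic type by assumption. Otherwise, write $V{\downarrow_N}=e(W_1\oplus\dots\oplus W_t)$ where the $W_i$ form a $G$-orbit of non-trivial irreducible $FN$-modules. I would then split into the same three cases that appear in the proof of Theorem~\ref{T:non_quadratic}: if the $W_i$ are not self-dual, Proposition~\ref{P:hyperbolic_quadratic} directly gives that $V$ has quadratic type; if the $W_i$ are self-dual and $e>1$, Proposition~\ref{P:even_quadratic_type} gives that $V$ has quadratic type; and if the $W_i$ are self-dual with $e=1$, then $V$ is the canonical self-dual irreducible $FG$-module over $W_1$, so by the hypothesis on $FN$ (applied to the non-trivial $W_1$) and Proposition~\ref{P:canonical_quadratic_type}, $V$ has quadratic type.

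The argument is really a packaging of the preceding three propositions, and I do not anticipate a substantive obstacle. The one point needing minor care is verifying that in the forward direction the canonical module $V$ over a non-trivial $W$ is itself non-trivial, so that the hypothesis applies, and similarly that inflation preserves non-triviality and irreducibility of modules for $G/N$; both are immediate.
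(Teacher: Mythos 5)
Your proposal is correct and follows essentially the same route as the paper: the forward direction handles $G/N$ by inflation and $N$ via the canonical module together with Proposition~\ref{P:canonical_quadratic_type}, and the reverse direction reduces to the case where $N$ acts trivially. The only cosmetic difference is that in the reverse direction you re-derive the needed dichotomy from Propositions~\ref{P:hyperbolic_quadratic}, \ref{P:even_quadratic_type} and \ref{P:canonical_quadratic_type}, whereas the paper simply cites Theorem~\ref{T:non_quadratic}, which packages exactly that argument.
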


\begin{proof}
Suppose first that all non-trivial self-dual irreducible $FG$-modules are of quadratic type. Then the same is obviously true for $G/N$ and we must show that it is true for $N$. Let $W$ be a non-trivial self-dual  irreducible $FN$-module and let $V$ be the canonical self-dual irreducible $FG$-module over $W$. Then $V$ is  irreducible and of quadratic type, from the hypothesis. So $W$ is of quadratic type, according to Proposition \ref{P:canonical_quadratic_type}.

Conversely, suppose that all non-trivial self-dual irreducible modules of both $N$ and $G/N$ are of quadratic type. Then for the sake of contradiction, suppose that $V$ is a self-dual irreducible $FG$-module which has non-quadratic type. Theorem \ref{T:non_quadratic} implies that $N$ acts trivially on $V$. So $V$ is a self-dual irreducible $F(G/N)$-module of non-quadratic type, contrary to hypothesis.
\end{proof}

\section{Irreducible self-dual modules of non-abelian finite simple groups}

\noindent The proof of Corollary \ref{C:necessary_and_sufficient_condition} shows that in an inductive approach to deciding whether a self-dual irreducible $FG$-module is of quadratic type, the main difficulty lies in solving the problem for non-abelian simple groups, and as far as we know, this is an unsolved and difficult question. See \cite[Remark 3.4(a)]{W}.

At a simpler, but by no means straightforward, level, we can ask if every non-abelian simple group has a non-trivial irreducible module of quadratic type. The answer is no, for according to \cite{HM}, the Mathieu simple group $M_{22}$ has no such modules. We were unable to find an explicit reference to the calculations needed to verify this in the literature. So we outline a proof here, which only assumes some knowledge of the irreducible Brauer characters of certain groups. 

We use the notation and decomposition matrices from \cite{MA} and character tables from \cite{GAP4}. So $M_{22}$ has exactly two non-trivial self-dual irreducible Brauer characters $\phi_4$ and $\phi_7$, with degrees $34$ and $98$, respectively.

Now $M_{22}$ has two conjugacy classes of maximal subgroups isomorphic to the alternating group $A_7$. The restriction of $\phi_4$ to any $A_7$ is the sum of an irreducible character $\psi$ of degree $20$ plus another of degree $14$. In turn, the restriction of $\psi$ to $A_6$ is the sum of an irreducible character $\mu$ of degree $4$ plus two irreducible characters of degree $8$. Examining the values of $\mu$, we see that it is the Brauer character of a representation defined over ${\mathbb F}_2$. So $\mu$ cannot be of quadratic type. For the order of $A_6$ is greater than the order of each of the two orthogonal groups $O_+(4,2)\cong S_3\wr C_2$ and $O_-(4,2)\cong S_5$. It now follows from \cite[Lemma 1.2]{GW95} that $\phi_4$ is not of quadratic type.

Next we observe that $\phi_2\phi_3=2\phi_1+\phi_7$. Here $\phi_2$ and $\phi_3=\ov\phi_4$ are of degree $10$, and $\phi_1$ is the trivial Brauer character. Now $\phi_2\phi_3$ is the Brauer character of the ring $E$ of $F$-endomorphisms of a module affording $\phi_2$. Let $\op{Tr}:E\rightarrow F$ be the trace map and let $W=\{A\in E\mid\op{Tr}(A)=0_F\}$. Then $W$ is a submodule of $E$ and $E/W$ is the trivial module. So $W$ has Brauer character $\phi_1+\phi_7$. Clearly the identity map $I\in E$ spans the unique trivial submodule of $W$. Now for each $A\in W$, set $Q(A)$ as the coefficient of $x^{n-2}$ in the $F$-characteristic polynomial of $A$. Then $Q$ is a $G$-invariant quadratic form, with polarization $B(A,B)=\op{Tr}(AB)$, for all $A,B\in W$. In particular $I$ spans $\op{Rad}(Q)$. As $I$ has characteristic polynomial $(x-1)^{10}=x^{10}+x^8+x^2+1$, we see that $Q(I)=1_F$. So the singular radical $\op{Rad}_0(Q)$ is $0$. Now \cite[Theorem 1.3]{GW95} implies that $\phi_7$ is not of quadratic type.

On the other hand, each of the remaining 25 sporadic finite simple groups does have irreducible $FG$-modules of quadratic type. To show this, we need \cite[Corollary IV.11.9]{F}. We give an elementary proof here, for the convenience of the reader:

\begin{Lemma}
Let $G$ be a finite group and let $F$ be a perfect field of characteristic $2$. Then each self-dual but non-quadratic irreducible $FG$-module is in the principal $2$-block of\/ $G$.
\begin{proof} Let $M$ be a self-dual irreducible $FG$-module which is not in the principal $2$-block of $G$. We aim to show that $M$ has quadratic type. For this, we exploit the fact that there is no module $W$ such that $\op{soc}(W)$ is the trivial module and $\frac{W}{\op{soc}(W)}\cong M$.

Let $B$ be a $G$-invariant non-degenerate symplectic bilinear form on $M$, and let $Q$ be a quadratic form on $M$ which polarizes to $B$. For all $g\in G$, define $Q^g(m):=Q(gm)$. Then $Q^g$ is a quadratic form which polarizes to $B$, as
$$
Q^g(m_1+m_2)=Q(gm_1)+B(gm_1,gm_2)+Q(gm_2)=Q^g(m_1)+B(m_1,m_2)+Q^g(m_2).
$$

Consider the quadratic form $Q+Q^g$. This is additive and satisfies $(Q+Q^g)(\lambda m)=\lambda^2(Q+Q^g)(m)$, for all $\lambda\in F$ and $m\in M$. As $F$ is perfect, there exists $\phi_g\in M^*$ such that $Q(gm)=Q(m)+\phi_g(m)^2$, for all $m\in M$. Now for $g,h\in G$, and $m\in M$ we have
$$
\begin{aligned}
Q(m)+\phi_{gh}(m)^2&=Q(ghm)\\
&=Q(hm)+\phi_g(hm)^2\\
&=Q(m)+\phi_h(m)^2+h^{-1}\phi_{g}(m)^2\\
\end{aligned}
$$
So $\phi:G\rightarrow M^*$ satisfies the cocycle condition $\phi_{gh}=\phi_h+h^{-1}\phi_g$, for all $g,h\in G$.

Now take $W$ to be the Cartesian product $M\times F$, endowed with the obvious $F$-vector space structure. Define an $FG$-module structure on $W$ via
$$
g(m,\lambda)=(gm,\phi_g(m)+\lambda),\quad\mbox{for all $m\in M, \lambda\in F$ and $g\in G$.}
$$
This is an action because for all $g,h\in G$, we have
$$
\begin{aligned}
(gh)(m,\lambda)&=(ghm,\phi_{gh}(m)+\lambda)\\
&=(ghm,\phi_g(hm)+\phi_h(m)+\lambda)\\
&=g(hm,\phi_h(m)+\lambda)\\
&=g(h(m,\lambda)).
\end{aligned}
$$
Clearly $W$ has a submodule $0\times F$ isomorphic to the trivial $FG$-module $F$, and $W$ modulo this submodule is isomorphic to $M$. Our assumption on $M$ forces $W\cong M\oplus F$ as $FG$-modules. So there is $\psi\in M^*$ such that $m\mapsto(m,\psi(m))$, for $m\in M$, is an injective $FG$-module map $M\rightarrow W$.

Now on the one hand $g(m,\psi(m))=(gm,\psi(gm))$. On the other hand $g(m,\psi(m))=(gm,\phi_g(m)+\psi(m))$. Comparing these expressions, we see that $\phi_g(m)+\psi(gm)=\psi(m)$, for all $g\in G$. Finally, define the quadratic form $\hat Q$ on $M$ via
$$
\hat Q(m)=Q(m)+\psi(m)^2,\quad\mbox{for all $m\in M$.}
$$
Then it is clear that $\hat Q$ polarizes to $B$. Furthermore, for all $g\in G$ we have
$$
\hat Q(gm)=Q(gm)+\psi(gm)^2=Q(m)+\phi_g(m)^2+\psi(gm)^2=Q(m)+\psi(m)^2=\hat Q(m).
$$
So $\hat Q$ is $G$-invariant.
\end{proof}
\end{Lemma}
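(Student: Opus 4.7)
The plan is to exploit the block-theoretic hypothesis to split an extension class built from the failure of $G$-invariance of a chosen quadratic form. I would start with a non-degenerate $G$-invariant alternating form $B$ on $M$, which exists by Fong's Lemma, and pick any quadratic form $Q$ polarizing to $B$; such $Q$ exists but generally fails to be $G$-invariant. The goal is to correct $Q$ by adding the square of an element of $M^*$ so that it becomes $G$-invariant.

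First I would measure the defect of $G$-invariance. For each $g \in G$, the translate $Q^g(m) := Q(gm)$ is again a quadratic form polarizing to $B$, so $Q^g - Q$ is additive and satisfies $(Q^g - Q)(\lambda m) = \lambda^2(Q^g - Q)(m)$. Because $F$ is perfect, the squaring map on $F$ is bijective, so I can extract a unique $\phi_g \in M^*$ with $Q(gm) - Q(m) = \phi_g(m)^2$. Expanding $Q((gh)m)$ in two ways and taking square roots (unambiguous in characteristic $2$) yields a cocycle identity of the form $\phi_{gh} = \phi_h + h^{-1}\phi_g$, where $M^*$ carries its natural $G$-action.

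Next I would realize this cocycle geometrically. On the set $W := M \oplus F$ define the $FG$-module structure $g \cdot (m,\lambda) = (gm,\, \phi_g(m) + \lambda)$; the cocycle identity is precisely what is needed for this to define an action, and the resulting $W$ fits in a short exact sequence $0 \to F \to W \to M \to 0$ with $F$ the trivial module.

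The crucial step is then to invoke the hypothesis: since $M$ does not lie in the principal $2$-block while the trivial module does, there are no non-split extensions of $M$ by the trivial module ($\op{Ext}^1_{FG}$ between modules in distinct blocks vanishes). Hence $W$ splits, producing $\psi \in M^*$ with $\psi(gm) = \psi(m) + \phi_g(m)$ for all $g$ and $m$. Setting $\hat Q(m) := Q(m) + \psi(m)^2$, the form $\hat Q$ still polarizes to $B$ because $\psi^2$ polarizes to $0$, and a one-line verification using $(a+b)^2 = a^2 + b^2$ in characteristic $2$ together with the coboundary relation for $\psi$ shows $\hat Q$ is genuinely $G$-invariant, exhibiting $M$ as of quadratic type. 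The main obstacle I anticipate is the block-theoretic input: one must recognize the extension class of $W$ as an element of $\op{Ext}^1_{FG}(M, F)$ and invoke the vanishing of $\op{Ext}^1$ across blocks. Everything else is standard manipulation of quadratic forms in characteristic $2$, with perfectness supplying the necessary square roots.
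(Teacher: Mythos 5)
Your proposal is correct and follows essentially the same route as the paper's own proof: translate the failure of $G$-invariance of $Q$ into a $1$-cocycle $g\mapsto\phi_g$ via perfectness of $F$, realize it as an extension $0\to F\to W\to M\to 0$, split it using the block hypothesis (the paper phrases the vanishing of $\operatorname{Ext}^1_{FG}(M,F)$ as the non-existence of a module with trivial socle and top $M$, but it is the same fact), and then correct $Q$ by $\psi^2$ where $\psi$ is the resulting coboundary.
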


Using \cite{GAP4} and \cite{MA}, the only sporadic finite simple groups which do not have a real non-principal $2$-block are $M_{11},M_{22},M_{23}$ and $M_{24}$. Now $M_{11}$ has an orthogonal irreducible $K$-character $\chi_2$, of degree 10, whose restriction to $2$-regular elements is a self-dual irreducible Brauer character $\phi_2$. So $\phi_2$ has quadratic type. Similarly $M_{24}$ has an orthogonal irreducible $K$-character $\chi_7$, of degree 252, whose restriction to $2$-regular elements contains the self-dual irreducible Brauer character $\phi_6$ with multiplicity 1, but does not contain the trivial Brauer character. So $\phi_6$ has quadratic type. Finally, $\phi_6$ restricts to an irreducible Brauer character of a maximal subgroup $M_{23}$. So $M_{23}$ also has a quadratic type irreducible Brauer character.

All other simple group whose modular representations are tabulated in the modular \cite{A} have quadratic type irreducible Brauer characters, and we suspect that $M_{22}$ may be unique among all non-abelian finite simple groups in not having such a character. We note that the automorphism group of $M_{22}$ does have irreducible modules of quadratic type, since Proposition \ref{P:hyperbolic_quadratic} applies to certain irreducible modules of the automorphism group that are induced from irreducible modules of $M_{22}$ that are not self-dual.

\section{Real weakly regular $2$-blocks}\label{S:blockCovering}

\noindent We continue to assume that $G$ is a finite group and $N$ is a normal subgroup of $G$. The results in this section include real refinements of \cite[Theorem 4.4, Corollary 4.5]{M}. If $C$ is a conjugacy class of $G$, then $C^+$ is the sum of its elements in $RG$. Also $C^o$ is the class consisting of the inverses of the elements of $C$. Each $z\in\op{Z}(FG)$ can be written $z=\sum\beta(z,C)C^+$, where $C$ ranges over the conjugacy classes of $G$ and $\beta(z,C)\in F$. 

We use standard notation and results on blocks. In particular, corresponding to each $2$-block $B$ of $G$, there is a  primitive idempotent $e_B$ of the centre $\op{Z}(FG)$ of $FG$, an $F$-algebra homomorphism $\omega_B:\op{Z}(FG)\rightarrow F$, called the central character of $B$, and a $2$-subgroup $D$ of $G$ called a defect group of $B$. Then $D$ is only determined up to $G$-conjugacy, and $|D|=2^d$, where $d\geq0$ is called the defect of $B$. We use $\op{Irr}(B)$ and $\op{IBr}(B)$ to denote the irreducible $K$-characters and irreducible Brauer characters in $B$, respectively.

Let $\chi\in\op{Irr}(B)$, let $\psi$ be an irreducible constituent of $\chi{\downarrow_N}$ and let $b$ be the $2$-block of $N$ containing $\psi$. Then $B$ is said to cover $b$, and the $2$-blocks of $N$ covered by $B$ form a single $G$-orbit. Set $e_b^G$ as the sum of the distinct $G$-conjugates of $e_b$. Then $e_b^G$ is an idempotent in $\op{Z}(FG)$ which is the sum of the block idempotents of all blocks of $G$ which cover $b$.

Recall that $B$ is said to be weakly regular (with respect to $N$) if it has maximal defect among the set of blocks of $G$ which cover $b$. This happens if and only if $B$ has a defect group $D$ such that $DN/N$ is a Sylow $2$-subgroup of the stabilizer of $b$ in $G$.

Let $\chi$ be a $K$-character or Brauer character belonging to $B$. Then $\chi(1)_2\geq|G:D|_2$. If equality occurs, we say that $\chi$ has height $0$. Recall that if $\chi$ is irreducible, its central character is defined by $\omega_\chi(C^+):=\chi(C^+)/\chi(1)$, for all conjugacy classes $C$ of $G$. It is classical that $\omega_\chi(C^+)\in R$. Indeed its image $\omega_\chi(C^+)^*$ in $F$ is independent of $\chi\in\op{Irr}(B)$, as it equals $\omega_B(C^+)$. Suppose now that $\theta\in\op{IBr}(B)$ has height $0$. We claim that for all $2$-regular conjugacy classes $C$ of $G$
\begin{equation}\label{E:central_theta}
\frac{\theta(C^+)}{\theta(1)}\in R
\qquad
\mbox{and}
\qquad
\left(\frac{\theta(C^+)}{\theta(1)}\right)^*=\omega_B(C^+).
\end{equation}
For, it is known that there are integers $n_\chi$ such that $\theta\equiv\sum_{\chi\in\op{Irr}(B)}n_\chi\chi$ on the $2$-regular elements of $G$. As $\chi(C^+)/\chi(1)$ and $\chi(1)/\theta(1)$ belong to $R$, we get
$$
\frac{\theta(C^+)}{\theta(1)}=\sum_{\chi\in\op{Irr}(B)}\left(\frac{\chi(C^+)}{\chi(1)}\right)\left(\frac{n_\chi\chi(1)}{\theta(1)}\right)
\qquad\mbox{belongs to $R$.}
$$
Moreover $\left(\frac{\theta(C^+)}{\theta(1)}\right)^*=\omega_B(C^+)\left(\frac{\sum_{\chi\in\op{Irr}(B)}n_\chi\chi(1)}{\theta(1)}\right)^*=\omega_B(C^+)$.

Our first result includes a proof of part (i) of Theorem \ref{T:block_covering}:

\begin{Lemma}\label{L:block_covering_part1}
Let $b$ be a $2$-block of\/ $N$. Then $G$ has an odd number of weakly regular $2$-blocks covering $b$. So $G$ has a real weakly regular $2$-block covering $b$ if and only if\/ $b$ is $G$-conjugate to $b^o$. 

Let $B$ be a weakly regular $2$-block covering $b$. Then $\beta(e_B,C)\omega_B(C^+)=\beta(e_b^G,C)\omega_b(C^+)$, for all conjugacy class $C$ of\/ $G$ contained in $N$.
\begin{proof}
The first statement is proved in Lemma 5.1 of \cite{GM}, so we merely summarize the argument here. There is a defect preserving bijection between the blocks of $G$ covering $b$ and the blocks of the $G$-stabilizer of $b$ covering $b$. So we may assume that $b$ is $G$-invariant.

Let $B$ be as in the statement. In particular $e_B=e_Be_b$. So $1_F=\omega_B(e_B)=\omega_B(e_b)$. Thus there is a conjugacy class $L$ of $G$ contained in $N$ such that $\beta(e_b,L)\omega_B(L^+)\ne0_F$. Now $L$ is $2$-regular, as it is in the support of the block idempotent $e_b$. As $e_b$ is a sum of block idempotents of blocks of $G$ with a defect group contained in $D$, $L$ has a defect group contained in $D$. But $\omega_B(L^+)\ne0_F$. So $L$ has a defect group containing the defect group $D$ of $B$. We deduce that $D$ is a defect group of $L$.

Corollary 3.2 of \cite{GM} implies that $\beta(e_B,L)=\omega_B(L^{o+})$. But $\omega_B(L^{o+})=\omega_{B'}(L^{o+})$, for each block $B'$ of $G$ which covers $b$, as $L\subseteq N$. So, again by Corollary 3.2 of \cite{GM} $\beta(e_B,L)=\beta(e_{B'},L)$, if $B'$ is in addition weakly regular. On the other hand $\beta(e_{B'},L)=0_F$, if $B'$ is not weakly regular. As $e_b$ is the sum of the block idempotents of all blocks of $G$ covering $b$, we see that $\beta(e_b,L)=\beta(e_B,L)\rho$, where $\rho$ is the number of weakly regular $2$-blocks of $G$ covering $b$. It follows from this that $\rho$ is odd.

Suppose that there is a real weakly regular $2$-block $B$ of\/ $G$ which covers $b$. Then $B=B^o$ also covers $b^o$. So $b$ is $G$-conjugate to $b^o$. Conversely, suppose that $b$ is $G$-conjugate to $b^o$. Then taking contragredients of blocks is an involution on the set of weakly regular $2$-block of\/ $G$ covering $b$. As this set has odd size $\rho$, we deduce that there is a real weakly regular $2$-block of\/ $G$ which covers $b$.

For the last statement, let $C$ be a conjugacy class of\/ $G$ which is contained in $N$ for which $\beta(e_B,C)\omega_B(C^+)\ne0_F$ or $\beta(e_b,C)\omega_b(C^+)\ne0_F$. As $\omega_B(C^+)=\omega_b(C^+)$, the argument above implies that $D$ is a defect group of $C$. But then $\beta(e_b,C)=\beta(e_B,C)\rho=\beta(e_B,C)$, as $\op{char}(F)=2$. We conclude that $\beta(e_B,C)\omega_B(C^+)=\beta(e_b,C)\omega_b(C^+)$.
\end{proof}
\end{Lemma}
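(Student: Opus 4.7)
I would begin by reducing to the case that $b$ is $G$-invariant, using the defect-preserving bijection between $2$-blocks of $G$ covering $b$ and $2$-blocks of the $G$-stabilizer of $b$ covering $b$ (Lemma~5.1 of \cite{GM}). This bijection commutes with the contragredient operation $B \mapsto B^o$, and under the reduction $e_b^G$ becomes $e_b$ itself, which decomposes as $e_b = \sum_{B'} e_{B'}$ over all $2$-blocks $B'$ of $G$ covering $b$.

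To count the weakly regular blocks covering $b$, I would fix one such block $B$ with defect group $D$ and exhibit a $2$-regular conjugacy class $L \subseteq N$ with $\omega_B(L^+) \neq 0_F$ and $\beta(e_b, L) \neq 0_F$. The existence of such an $L$ is forced by expanding $1_F = \omega_B(e_B) = \omega_B(e_B e_b) = \omega_B(e_b)$ as a sum of class coordinates in the support of $e_b$. These two nonvanishing conditions together pin down $D$ as a defect group of $L$. Applying Corollary~3.2 of \cite{GM} to rewrite $\beta(e_{B'}, L)$ in terms of $\omega_{B'}(L^{o+}) = \omega_b(L^{o+})$ (using $L \subseteq N$) shows that $\beta(e_{B'}, L)$ takes a common nonzero value for all weakly regular blocks $B'$ covering $b$, and vanishes for the non-weakly-regular ones since their defect groups are strictly smaller than $D$. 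Summing $e_b = \sum_{B'} e_{B'}$ in the $L$-coordinate yields $\beta(e_b, L) = \rho\,\beta(e_B, L)$, where $\rho$ counts the weakly regular blocks covering $b$; as $F$ has characteristic $2$ and the left side is nonzero, $\rho$ must be odd.

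The reality statement then follows immediately: if $b$ is $G$-conjugate to $b^o$, then after the reduction $b = b^o$, and $B \mapsto B^o$ is an involution on the set of weakly regular blocks covering $b$. This set has odd size, so the involution has a fixed point, which is a real weakly regular block. Conversely, any real weakly regular block covering $b$ also covers $b^o$, forcing $b$ and $b^o$ to be $G$-conjugate. For the concluding identity, the same analysis applied to $C$ in place of $L$ makes the equation trivially $0_F = 0_F$ unless $D$ is a defect group of $C$; in that case $\beta(e_b, C) = \rho\,\beta(e_B, C) = \beta(e_B, C)$ in characteristic $2$, and $\omega_B(C^+) = \omega_b(C^+)$ because $C \subseteq N$.

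The principal technical obstacle I anticipate is establishing that the defect group of the auxiliary class $L$ actually equals $D$. This requires reconciling two opposing inclusions: on the one hand, $\omega_B(L^+) \neq 0_F$ forces the defect group of $L$ to contain a $G$-conjugate of $D$; on the other hand, $L$ lying in the support of $e_b$ bounds its defect group by a defect group of $b$, which under the weak regularity hypothesis matches $D \cap N$ up to conjugacy. Once this dictionary between class defects and block defects is in place, the rest of the argument is essentially linear bookkeeping with block idempotents.
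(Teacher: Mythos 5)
Your outline follows the paper's own proof step for step: the same reduction to $G$-invariant $b$ via the defect-preserving bijection, the same auxiliary conjugacy class $L$ produced from $1_F=\omega_B(e_b)$, the same appeal to Corollary~3.2 of \cite{GM}, the same parity count $\beta(e_b,L)=\rho\,\beta(e_B,L)$, and the same fixed-point argument for reality.

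The one place the plan slips is precisely the step you single out as the principal obstacle. The correct \emph{upper} bound on the $G$-defect group of $L$ is $D$ itself, not a defect group of $b$. After the reduction, $e_b$ lies in $\op{Z}(FG)$ and decomposes there as $e_b=\sum_{B'}e_{B'}$, the sum over all $2$-blocks $B'$ of $G$ covering $b$; because $B$ is weakly regular, every such $B'$ has a defect group contained in $D$ up to $G$-conjugacy, so every $G$-class in the support of $e_b$ has $G$-defect group contained in $D$. By contrast, a defect group of $b$ is $G$-conjugate to $D\cap N$, so the bound you propose would give $\op{defect}_G(L)\leq_G D\cap N$; combined with the lower bound $D\leq_G\op{defect}_G(L)$ coming from $\omega_B(L^+)\neq0_F$, this would force $D$ into $N$, which is false in general. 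You should also note explicitly that $L$ is $2$-regular (it lies in the support of a block idempotent), since this is needed before Corollary~3.2 of \cite{GM} can be invoked, and again when the same reasoning is applied to the class $C$ in the final identity.
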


We need one more result before proving part (ii) of Theorem \ref{T:block_covering}:

\begin{Lemma}\label{L:invariant_real_height0}
Let $b$ be a real $G$-invariant $2$-block of\/ $N$. Then $G$ has a self-dual Brauer character $\phi$ such that $\phi$ vanishes off\/ $N$ and $\phi{\downarrow_N}=e(\theta_1+\dots+\theta_t)$ where both $e$ and $t$ are odd and $\theta_1,\dots,\theta_t$ are distinct self-dual height $0$ irreducible Brauer characters in $b$.
\begin{proof}
Note that we are not claiming that $\phi$ is irreducible.

Consider the $G$-set $X:=\{\theta\in\op{IBr}(b)\mid\mbox{$\theta$ has height zero and }\Phi_\theta(1)_2=|N|_2\}$. Then $|X|$ is odd, according to Lemma \ref{L:odd_height0}. Also duality is an involution on $X$. So there is a $G$-orbit $\theta_1,\dots,\theta_t$ in $X$, with $t$ odd and all $\theta_i$ self-dual and of height $0$.

Let $T$ be the inertial group of $\theta_1$ in $G$. Then $T$ contains a Sylow $2$-subgroup $S$ of $G$. As $SN/N$ is a $2$-group, $\theta_1$ has a unique extension $\hat\theta_1$ to an irreducible Brauer character of $SN$. Notice that $\hat\theta_1$ vanishes off $N$, as $N$ contains all $2$-regular elements of $SN$.  

Set $\phi:={\hat\theta_1}{\uparrow^G}$. Then $\phi$ is self-dual and $\phi{\downarrow_N}=\frac{1}{[SN:N]}({\theta_1}{\uparrow^G}){\downarrow_N}=e(\theta_1+\dots+\theta_t)$, where $e=[T:SN]$ is odd. Finally $\phi$ vanishes off $N$ as $\hat\theta_1$ vanishes off $N$.
\end{proof}
\end{Lemma}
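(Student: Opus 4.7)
The plan is to extract a single $G$-orbit of self-dual characters from the odd-cardinality set provided by Lemma \ref{L:odd_height0}, then use the fact that the orbit size is odd to place its inertia group above a Sylow $2$-subgroup of $G$, and finally induce an extension up to $G$.

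First I would work with the set $X:=\{\theta\in\op{IBr}(b)\mid\theta \text{ has height }0\text{ and }\Phi_\theta(1)_2=|N|_2\}$, which is odd by Lemma \ref{L:odd_height0}. Because height, the degree of $\Phi_\theta$, and membership in $b$ are all preserved by duality (using $b=b^o$) and by $G$-conjugation (using that $b$ is $G$-invariant), the set $X$ is stable under both duality and the $G$-action. Partition $X$ into $G$-orbits and pair each non-self-dual orbit $O$ with its dual orbit $\ov O$; these pairs contribute an even number to $|X|$. The remaining orbits, those with $\ov O = O$ as sets, must therefore contribute an odd total, so at least one such orbit has odd size $t$. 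In a duality-stable $G$-orbit of odd size, duality is an involution on a finite set of odd cardinality and hence has a fixed point $\theta_1$; since $G$-conjugates of a self-dual character are self-dual, every character in the orbit is in fact self-dual. Label them $\theta_1,\dots,\theta_t$.

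Next let $T$ be the $G$-stabilizer of $\theta_1$, so $[G:T]=t$ is odd and $T$ therefore contains a Sylow $2$-subgroup $S$ of $G$. Consider the intermediate subgroup $SN$. Since $SN/N$ is a $2$-group, every $2$-regular element of $SN$ already lies in $N$, so any class function of $SN$ that restricts to $\theta_1$ on $N$ and vanishes elsewhere is automatically a well-defined Brauer character—and Clifford theory (the irreducible Brauer characters of the $2$-group $SN/N$ being just the trivial one) produces a unique irreducible Brauer extension $\hat\theta_1$ of $\theta_1$ to $SN$. By construction $\hat\theta_1$ vanishes off $N$.

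Finally set $\phi:=\hat\theta_1{\uparrow^G}$. That $\phi$ vanishes off $N$ follows because $N$ is normal in $G$: any $G$-conjugate of an element outside $N$ stays outside $N$, hence outside the support of $\hat\theta_1$. Self-duality of $\phi$ follows from $\ov{\hat\theta_1}=\hat\theta_1$, which holds because $\hat\theta_1$ restricts to the self-dual character $\theta_1$ on $N$ and vanishes off $N$ (a set closed under inversion since $N$ is normal). For the restriction $\phi{\downarrow_N}$, since $N\subseteq SN$ and $N$ is normal in $G$, the double-coset set $SN\backslash G/N$ reduces to $SN\backslash G$, and Mackey's formula collapses to $\phi{\downarrow_N}=\sum_{x\in SN\backslash G}\theta_1^x$. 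Grouping right cosets of $SN$ inside each right coset of $T$ gives $\phi{\downarrow_N}=[T:SN](\theta_1+\dots+\theta_t)$, so $e=[T:SN]$. Because $S\subseteq T$ contains a Sylow $2$-subgroup of $G$, we have $|T|_2=|G|_2$, and using $|S\cap N|=|N|_2$ one checks $SN/N$ is a Sylow $2$-subgroup of $T/N$, so $e$ is odd.

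The main obstacle I anticipate is the orbit selection step: combining the involution from duality with the $G$-action to guarantee a $G$-orbit that is both of odd size and entirely self-dual. The parity bookkeeping on orbit unions is what forces the existence of the required orbit; once found, the extension and induction mechanics are routine, and the computation of $\phi{\downarrow_N}$ is a straightforward Mackey argument whose only subtlety is verifying that $SN/N$ is Sylow in $T/N$ to conclude that the multiplicity $e$ is odd.
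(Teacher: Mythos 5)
Your proof is correct and follows essentially the same approach as the paper: same set $X$, same parity argument to find an odd self-dual $G$-orbit, same extension to $SN$ and induction to $G$. You fill in the orbit-selection bookkeeping and the Mackey computation of $\phi{\downarrow_N}$ in more detail than the paper, but the underlying argument is identical.
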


We now prove the uniqueness part (ii) of Theorem \ref{T:block_covering}:

\begin{Lemma}\label{L:block_covering_part2}
Let $b$ be a real $2$-block of\/ $N$. Then $G$ has a unique real $2$-block which covers $b$ and which is weakly regular with respect to $N$.
\end{Lemma}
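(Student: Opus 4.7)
My plan is to reduce first to the case where $b$ is $G$-invariant via Fong--Reynolds, then to pass to the unique block $\tilde b$ of $SN$ (with $S$ a Sylow $2$-subgroup of $G$) covering $b$, identify the weakly regular blocks of $G$ covering $b$ with the blocks of $G$ covering $\tilde b$, and finally deduce uniqueness of a real block covering $\tilde b$ from Corollary~\ref{C:odd_quotient} together with the classical fact that odd-order groups have no non-trivial self-dual irreducible characters.

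For the first reduction, let $T=\op{Stab}_G(b)$; since $b=b^o$, also $T=\op{Stab}_G(b^o)$, so the Fong--Reynolds bijection between blocks of $T$ covering $b$ and blocks of $G$ covering $b$ is both defect-preserving and duality-equivariant. It therefore sends real (resp.\ weakly regular) blocks to real (resp.\ weakly regular) blocks, and I may replace $G$ by $T$ and assume $b$ is $G$-invariant. Since $SN/N$ is a $2$-group and $b$ is $G$-invariant, there is a unique block $\tilde b$ of $SN$ covering $b$; by uniqueness it is $G$-invariant and real, with defect group $\tilde D$ of order $|D_b|\cdot|G/N|_2$, the maximal defect for blocks of $G$ covering $b$. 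If $B$ is any block of $G$ covering $\tilde b$ with defect group $D$, then $D\cap SN$ is $G$-conjugate to $\tilde D$ while $D/(D\cap SN)$ embeds into $G/SN$; since $|G/SN|=|G/N|_{2'}$ is odd this forces $D\subseteq SN$ and $|D|=|\tilde D|$, so $B$ is weakly regular. Conversely any weakly regular block of $G$ covering $b$ covers the unique $\tilde b$, so the two sets coincide and it suffices to show that $G$ has a unique real block covering $\tilde b$.

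For this last step, let $B$ be any real block of $G$ covering $\tilde b$. By Lemma~\ref{L:odd_height0} applied to $B$, there is an odd number of height-$0$ irreducible Brauer characters $\theta$ of $B$ with $\Phi_\theta(1)_2=|G|_2$; since $B$ is real, duality is an involution on this set, and it therefore has a fixed point, yielding a self-dual $\Theta\in\op{IBr}(B)$. By Corollary~\ref{C:odd_quotient}(ii) applied to the odd-index normal extension $SN\triangleleft G$, $\Theta\downarrow_{SN}$ is a sum of distinct self-dual irreducible Brauer characters of $SN$, necessarily lying in $\tilde b$; and by (i), $\Theta$ is the unique self-dual irreducible Brauer character of $G$ lying over the $G$-orbit of any such constituent. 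The standard Clifford correspondence for blocks covering a $G$-invariant block of a normal subgroup of $2'$-index places the blocks of $G$ covering $\tilde b$ in a duality-equivariant bijection with the blocks of the semisimple algebra $F[G/SN]$, equivalently with the irreducible characters of $G/SN$; since by Burnside's classical theorem the odd-order group $G/SN$ has only the trivial self-dual irreducible character, exactly one of these blocks is real, and hence $G$ has a unique real block covering $\tilde b$.

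The main obstacle I anticipate is the last ingredient, the duality-equivariant Clifford correspondence between blocks of $G$ covering $\tilde b$ and blocks of $F[G/SN]$; this is a standard but non-trivial piece of block theory (a form of Dade's theorem on blocks for $p'$-quotients) rather than something that drops out directly from the earlier lemmas, and the care lies in verifying that the bijection intertwines the involution $B\mapsto B^o$ on blocks of $G$ with the involution $\chi\mapsto\bar\chi$ on characters of $G/SN$.
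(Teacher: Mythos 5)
Your reduction via Fong--Reynolds to the case of a $G$-invariant $b$ is fine, and the choice to pass to $SN$ is natural since the paper's Lemma~\ref{L:invariant_real_height0} also uses $SN$. But there is a fatal flaw at the heart of your argument: $SN$ is not normal in $G$ in general. You write ``the odd-index normal extension $SN\triangleleft G$'' and apply Corollary~\ref{C:odd_quotient} and a Clifford/Dade-type block correspondence for the pair $SN\triangleleft G$, but $SN\triangleleft G$ holds if and only if $G/N$ has a normal Sylow $2$-subgroup. For instance $G=S_4$, $N=V_4$ already gives $SN=S$ of index $3$ and not normal, and one can arrange $N$ to have a real block with non-trivial real block theory. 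Without $SN\triangleleft G$, the phrase ``blocks of $G$ covering $\tilde b$'' has no meaning, $\tilde b$ is not a $G$-invariant block of a normal subgroup, Knörr's theorem in the form you use (computing $D\cap SN$) does not apply, and Corollary~\ref{C:odd_quotient} --- which explicitly assumes $N\triangleleft G$ --- cannot be invoked with $SN$ in the role of $N$. So the entire second half of your argument collapses.

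A secondary concern: even in the special case where $SN$ does happen to be normal, your Knörr argument would show that \emph{every} block of $G$ covering $b$ is weakly regular and that the set of blocks covering $\tilde b$ equals the set of blocks covering $b$; this is an unusually strong conclusion and should at least be flagged as specific to the normal-Sylow situation rather than stated as if routine. You also acknowledge that the duality-equivariant bijection with blocks of $F[G/SN]$ is not established, but this is moot given the normality failure. The paper avoids all of this: it uses $SN$ only as an intermediate subgroup $N\le SN\le T$ inside the inertia group $T$ of a self-dual height-zero $\theta_1\in\op{IBr}(b)$ to build the auxiliary Brauer character $\phi=\hat\theta_1{\uparrow^G}$ of Lemma~\ref{L:invariant_real_height0} (no normality of $SN$ needed), and then shows by a computation with central characters and the $B$-part $\phi_B$ that $\phi_B(1)/\theta_1(1)$ is odd, forcing some self-dual $\mu\in\op{IBr}(B)$ with $m_\mu e_\mu$ odd; that $\mu$ must be the canonical character of Theorem~\ref{T:real_extends} lying over $\theta_1$, which pins down $B$ uniquely. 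If you want to repair your argument you would need either to replace $SN$ by a genuine normal subgroup, or to abandon the block-covering-over-$SN$ strategy altogether in favour of something like the paper's character-theoretic computation.
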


\begin{proof}
We may assume that $b$ is $G$-invariant, and we let $B$ be any real weakly regular $2$-block of $G$ covering $b$. Let $\phi$ be the Brauer character of $G$ defined in Lemma \ref{L:invariant_real_height0}. So $\phi{\downarrow_N}=e(\theta_1\dots+\theta_t)$ where $et$ is odd and $\theta_1,\dots,\theta_t$ are distinct self-dual height $0$ irreducible Brauer characters in $b$. Write $\phi=\sum_{\mu\in\op{IBr}(G)}m_\mu\mu$, where $m_\mu$ are non-negative integers. Then $\phi_B:=\sum_{\mu\in\op{IBr}(B)}m_\mu\mu$ is the $B$-part of $\phi$.

Let $C$ be a $2$-regular conjugacy class of $G$ which is contained in $N$. Then $\theta_i(C^+)=\theta_1(C^+)$, for $i=1,\dots,t$, as $\theta_i$ is $G$-conjugate to $\theta_1$. So
$$
\left(\frac{\phi(C^+)}{\theta_1(1)}\right)^*=\left(\frac{et\theta_1(C^+)}{\theta_1(1)}\right)^*=\omega_b(C^+)=\omega_B(C^+),
$$
where we have used \eqref{E:central_theta}.

Next let $\hat{e_B}$ be the unique idempotent in $\op{Z}(RG)$ with $\hat{e_B}^*=e_B$. Then for all $\mu\in\op{IBr}(G)$ we have $\mu(\hat{e_B})=\mu(1)$ or $0_R$, as $\mu$ does or does not belong to $B$, respectively. So
$$ 
\left(\frac{\phi_B(1)}{\theta_1(1)}\right)^*=\left(\frac{\phi(\hat{e_B})}{\theta_1(1)}\right)^*=\sum\beta(e_B,C^+)\omega_B(C^+)=\sum\beta(e_b,C^+)\omega_b(C^+)=\omega_b(e_b)=1_F.
$$
Here in both sums, $C$ ranges over the conjugacy classes of $G$ which are contained in $N$, as $\phi$ vanishes off $N$. Also the middle equality arises from the last assertion in Lemma \ref{L:block_covering_part1}.

Now for each $\mu\in\op{IBr}(B)$ with $m_\mu\ne0$, we have $\mu{\downarrow_N}=e_\mu(\theta_1\dots+\theta_t)$, for some integer $e_\mu>0$. Then by the previous displayed equation
$$
\frac{\phi_B(1)}{\theta_1(1)}=t\,\sum_{\mu\in\op{IBr}(B)}m_\mu e_\mu\quad\mbox{is an odd integer.}
$$
As $m_\mu e_\mu=m_{\ov{\mu}}e_{\ov{\mu}}$, it follows that there is a self-dual $\mu\in\op{IBr}(B)$ such that $m_\mu e_\mu$ is odd. Then $\mu$ is the canonical irreducible Brauer character of $G$ lying over $\theta_1$ given by Theorem \ref{T:real_extends}. As $\theta_1$ determines $\mu$, which in turn determines $B$, we conclude that $B$ is the only real weakly regular $2$-block of $G$ which covers $b$, as we wished to show.
\end{proof}


\end{document}